\newtheorem{proposition}{Proposition}
\newtheorem{lemma}{Lemma}
\newtheorem{definition}{Definition}
\newtheorem{theorem}{Theorem}
\begin{document}

\title{On use of an explicit congruence predicate in Bounded Arithmetic}
\author{Yoriyuki Yamagata}
\address{National Institute of Advanced Science and Technology,
Mitsui-Sumitomo Kaijo Senri Bldg. 5F,
1-2-14 Sinsenri-Nishimachi,
560-0083 Toyonaka,
Japan}
\email{yoriyuki.yamagata@aist.go.jp}

\begin{abstract}
 We introduce system $S^2_0E$, a bounded arithmetic corresponding
 to Buss's $S^2_0$ with the predicate $E$ which signifies the existence
 of the value.  Then, we show that we can $\Sigma^b_2$-define truthness
 of $S^2_0 E$ and therefore we can prove consistency of $S^2_0 E$ in
 $S^2_2$.  Finally, we conjecture that $S^2_0 E + \Sigma^b_1-PIND$
 interprets $S^2_1$.
\end{abstract}

\maketitle

\section{Introduction}

One of the central questions concerning Bounded Arithmetic is
whether Buss's hierarchy $S^2_1 \subseteq S^2_2 \subseteq \dots$ of
theories collapses \cite{Buss:Book}.  Natural way to show difference
between these theories is to look whether these theories proves (some
appropriate formulation of ) a consistency statement of some theory $T$.
However, known results are mostly negative.  Pudl\'ak
\cite{PudlakNote} shows $S^2$ cannot prove bounded consistency of
$S^2_1$.  Buss and Ignjatovi\'c \cite{BussUnprovability} improve Pudl\'ak
result showing that $S^2_i$ cannot prove $B_i$-bounded consistency of
$S^2_0$.  

Here, natural question arises: is there a ``sufficiently strong'' theory
which can be proved consistent inside $S^2_i$ for some $i \in
\mathbf{N}$?  By ``sufficiently strong'' theory we mean a theory $T$
which can be used as a replacement of $S^2_0$ to formalize $S^2_1$.  In
other word, we mean a theory $T$ such that $T$ plus $\Sigma^b_1$-$PIND$
can interpret $S^2_1$.

To prove consistency of $T$ inside $S^2_i$, natural way is having a
truth definition of the language of $T$ inside $S^2_i$.  Looking Takeuti
\cite{Takeuti:Truth}, main difficulty to have a truth definition of the
language of $S^2$ inside $S^2_i$ is the fact that $S^2_i$ cannot
uniformly prove the existence of valuation of the terms.
This suggests that adding a predicate which signifies convergence of
terms to $S^2_i$ makes consistency proof of $S^2_i$ easier.

In this paper, we define $S^0_2 E$, a bounded arithmetic with an
explicit congruence predicate and prove its soundness inside $S^2_2$.
$S^0_2E$ does not have any induction axioms, hence it corresponds
$S^0_2$ in Buss's hierarchy.  In comparison to $S^0_2$, it is very weak
since for example, it does not contain commutativity of $+, \cdot$ and
so on.  However, we conjecture that if we add $S^0_2E$
$\Sigma^b_1$-PIND, it can interpret $S^2_1$.  This conjecture is
supported by the fact that $S^0_2E$ contains all inductive definition of
functions and predicates.  The language of $S^2_0 E$ is restricted to
$\Sigma^b_1$-formulas so that $S^2_2$ can prove its soundness.

This paper is organized as follows.  In Section \ref{sec:S02E}, The
system $S^0_2 E$ is introduced.  In Section \ref{sec:truthdef}, truth
definition of $S^0_2 E$ inside $S^2_2$ is given.  In Section
\ref{sec:soundness}, soundness and consistency of $S^0_2 E$ is proved
inside $S^2_2$.  Finally, in Section \ref{sec:conjectures} several
conjecture concerning $S^0_2E$ are given.

\section{A bounded arithmetic $S^0_2E$ using explicit congruence operator $E$}\label{sec:S02E}

\begin{definition}[Language of $S^0_2E$] Language of $S^0_2E$ consists
 of the following symbols.
  \begin{itemize}
   \item constant 0
   \item Variables $x_1, x_2, \dots$ (denoted $x, y, a, b$, etc. )
   \item unary function symbols $S, \lfloor \frac{}{2} \rfloor, |\ |,
	 s_0, s_1$ and binary function symbol$+, \times, \#$
   \item unary predicate symbols $E$ and binary predicate (relation)
	 symbol $\leq, =$
   \item logical symbols $\vee, \wedge, \neg$ and quantifiers $\forall,
	 \exists$. 
  \end{itemize}
\end{definition}

\begin{definition}[Terms of $S^0_2E$]
Terms of $S^0_2E$ are defined as recursively as follows.
\begin{itemize}
 \item Variables $x_1, x_2, x_3, \dots$ are terms.  We use metavariable
       $x, y, z$ to denote variables.
 \item Constant $0$ is term.
 \item If $t_1, t_2$ are terms, $St_1, t_1 + t_2, t_1 \times t_2,
       \lfloor \frac{t_1}{2} \rfloor, |t_1|, t_1 \# t_2, s_0 t_1, s_1
       t_1$ are terms.
\end{itemize}
 We use $t_1, t_2, \dots, t, s, u$ to denote terms.  We say a term $t$
 \emph{sharply bounded} if $t$ has a form $|t'|$.  For any natural
 number, there is a standard notation using shortest combination of $0,
 s_0, s_1$.  If we use numerals $1, 2, 3, \dots$ in the language of
 $S^0_2E$, we understand that they are represented by such a standard
 notation.
\end{definition}

\begin{definition}
 Formulas of $S^0_2E$ are defined as follows.
 \begin{itemize}
  \item For terms $t_1, \dots, t_n$ and $n$-ary predicate symbol $p$,
	$pt_1 \dots t_n$ and $\neg p t_1 \dots t_n$ are formulas.  We
	often use $t \not= u$ and $t \not\leq u$ to denote $\neg t = u$
	and $\neg t \leq u$ respectively.
  \item If $\phi$ and $\psi$ are formulas, $\phi \vee \psi$ and $\phi
	\wedge \psi$ are formulas.
  \item If $\phi$ is a formula , $t \equiv |u|$ is a sharply bounded term and
	$x$ is a variable, the form $\forall x \leq t \phi$ is a
	formula.  We say quantifier in the form $\forall x \leq |t|$
	\emph{sharply bounded}.
  \item  If $\phi$ is a formula, $t$ is a term and $x$ is a variable,
	 the form $\exists x \leq t \phi$ is a formula.
 \end{itemize}
 We call a formula in the form $pt_1 \dots t_n$ ($p$ : predicate, $t_1,
 \dots, t_n$ : terms) \emph{atomic}
\end{definition}

\begin{definition}\label{defn:axiom}
 Axioms of $S^0_E$ are sequents defined as follows.
 \begin{description}
  \item[E-axioms] 
	     \begin{equation}
	      \rightarrow E0
	     \end{equation}
	     \begin{equation}
	      Ex \rightarrow Es_ix 
	     \end{equation} where $i = 0 \text{ or } 1$.
	     \begin{equation}
	      p t_1 \dots t_n \rightarrow Et_i
	     \end{equation} where $i = 1 \dots n$.
	     \begin{equation}
	      \neg p t_1 \dots t_n \rightarrow Et_i
	     \end{equation} where $i = 1 \dots n$.
  \item[Equality axioms] 
	     \begin{equation}
	      Ex \rightarrow x=x
	     \end{equation}
	     \begin{equation}
	      x=y, y=z \rightarrow x=z
	     \end{equation}
	     \begin{equation}
	      x=y \rightarrow s_i x = s_i y
	     \end{equation} where $i = 0 \text{ or } 1$.
  \item[Separation axioms] 
	     \begin{equation}
	      x \not= 0 \rightarrow x \not= s_0x
	     \end{equation}
	     \begin{equation}
	      Ex \rightarrow x \not= s_1x
	     \end{equation}
	     \begin{equation}
	      Ex \rightarrow s_0 x \not= s_1x
	     \end{equation}
  \item[Inequality axioms] 
	     \begin{equation}
	      Ex \rightarrow 0 \leq x
	     \end{equation}
	     \begin{equation}
	      x \leq y \rightarrow s_i x \leq s_i y
	     \end{equation} where $i = 0 \text{ or } 1$
	     \begin{equation}
	      x \leq y \rightarrow s_0 x \leq s_1 y
	     \end{equation}
  \item[Defining axioms] 
	     \begin{description}
	      \item[$Cond$] 
			 \begin{align}
			  Ey, Ez &\rightarrow Cond (0, y, z) = y \\
			  ECond(x, y, z) &\rightarrow Cond (s_0 x, y, z)
			   = Cond (x, y, z) \\
			  Ex, Ey, Ez &\rightarrow Cond (s_1 x, y, z) = z
			 \end{align}
	      \item[$S$] 
			 \begin{align}
			  &\rightarrow S0 = s_10 \\
			  Es_1x &\rightarrow Ss_0x = s_1x \\
			  ESx &\rightarrow Ss_1x = s_0(Sx)
			 \end{align}
	      \item[$|\ |$] 
			 \begin{align}
			  &\rightarrow |0|=0 \\
			  ES|x| &\rightarrow |s_0x| = Cond (x, 0,
			  S|x|)\\
			  ES|x| &\rightarrow |s_1x| = S|x|
			 \end{align}
	      \item[$\lfloor \frac{}{2} \rfloor$] 
			 \begin{align}
			  &\rightarrow \lfloor \frac{0}{2} \rfloor = 0
			  \\
			  Ex &\rightarrow \lfloor \frac{1}{2} s_0x
			  \rfloor = x \\
			  Ex &\rightarrow \lfloor \frac{1}{2} s_1x
			  \rfloor = x
			 \end{align}
	      \item[$\boxplus$] 
			 \begin{align}
			  Ex &\rightarrow x \boxplus 0 = x \\
			  Es_0(x \boxplus y) &\rightarrow x \boxplus s_0
			  y = Cond (y, x, s_0 (x \boxplus y))\\
			  Es_0(x \boxplus y) &\rightarrow x \boxplus s_1
			  y = s_0 (x \boxplus y)
			 \end{align}
	      \item[$\#$] 
			 \begin{align}
			  Ex &\rightarrow x \# 0 = 1\\
			  E (x \# y) \boxplus x &\rightarrow x \# s_0 y
			  = Cond (y, 1, (x \# y) \boxplus x)\\
			  E (x \# y) \boxplus x &\rightarrow x \# s_1y =
			  (x \# y) \boxplus x 
			 \end{align}
	      \item[$parity$] 
			 \begin{align}
			  &\rightarrow parity(0) = 0\\
			  Ex &\rightarrow parity(s_0 x) = 0\\
			  Ex &\rightarrow parity(s_1 x) = 1
			 \end{align}
	      \item[$+$] 
			 \begin{align}
			  Ex &\rightarrow x + 0 = x \\
			  E(\lfloor \frac{1}{2} x \rfloor + y)
			  &\rightarrow x + s_0 y = Cond(parity(x),
			  s_0(\lfloor \frac{1}{2} x \rfloor + y) ,
			  s_1(\lfloor \frac{1}{2} x \rfloor + y))\\
			  E(\lfloor \frac{1}{2} x \rfloor + y)
			  &\rightarrow x + s_1 y = Cond(parity(x),
			  s_1(\lfloor \frac{1}{2} x \rfloor + y) ,
			  s_0(S(\lfloor \frac{1}{2} x \rfloor + y)))
			 \end{align}
	      \item[$\cdot$] 
			 \begin{align}
			  Ex &\rightarrow x \cdot 0 = 0\\
			  E x \cdot y &\rightarrow x \cdot (s_0 y) =
			  s_0(x \cdot y)\\
			  Es_0(x \cdot y) + x &\rightarrow x \cdot (s_1
			  y) = s_0(x \cdot y) + x
			 \end{align}
	     \end{description}
 \end{description}
\end{definition}

\begin{definition}
 Let $\Gamma(\vec{x}) \rightarrow \Delta(\vec{x})$ be a sequent with
 free variables $\vec{x}$.  Then, \emph{substitution instance}
 $\Gamma(\vec{t}) \rightarrow \Delta(\vec{t})$ of $\Gamma(\vec{x})
 \rightarrow \Delta(\vec{x})$ is a sequent obtained by substituting
 terms $\vec{t}$ to free variables $\vec{x}$ in $\Gamma(\vec{x})
 \rightarrow \Delta(\vec{x})$.
\end{definition}

\begin{definition}
 The inference rules of $S^0_2E$ are defined as follows.
 \begin{description}
  \item[Identity rule] 
	     $$\infer{a \rightarrow a}{}$$
	     where $a$ is an atomic formula.
  \item[Axioms] 
	     $$\infer{\Gamma \rightarrow \Delta}{}$$
	     if $\Gamma \rightarrow \Delta$ is an substitution instance
	     of axioms defined in Definition \ref{defn:axiom}.
  \item[Stractural rules] 
	     \begin{description}
	      \item[Weakening rule] 
			 $$\infer{A, \Gamma \rightarrow
			 \Delta}{\Gamma \rightarrow \Delta}$$
			 $$\infer{\Gamma \rightarrow
			 \Delta, A}{\Gamma \rightarrow \Delta}$$
	      \item[Contraction] 
			 $$\infer{A, \Gamma \rightarrow
			 \Delta}{A, A, \Gamma \rightarrow \Delta}$$
			 $$\infer{\Gamma \rightarrow
			 \Delta, A}{\Gamma \rightarrow \Delta, A, A}$$
	      \item[Exchange] 
			 $$\infer{\Gamma, B, A, \Pi \rightarrow
			 \Delta}{\Gamma, A, B, \Pi \rightarrow \Delta}$$
			 $$\infer{\Gamma \rightarrow \Delta, B, A,
			 \Pi}{\Gamma \rightarrow \Delta, A, B, \Pi}$$
	     \end{description}
  \item[Logical rules]
	     \begin{description}
	      \item[$\neg$-rules] $$\infer{\neg p(t_1, \dots, t_n),
			 \Gamma \rightarrow \Delta}{\Gamma \rightarrow
			 \Delta, p(t_1, \dots, t_n)}$$ where $p$ is a
			 $n$-ary predicate.  $$\infer{Et_1, \dots, Et_n,
			 \Gamma \rightarrow \Delta, \neg p(t_1, \dots,
			 t_n)}{p(t_1, \dots, t_n), \Gamma \rightarrow
			 \Delta}$$
	      \item[$\wedge$-rules] $$\infer{A \wedge B. \Gamma
			 \rightarrow \Delta}{A, \Gamma \rightarrow
			 \Delta}$$ 
			 $$\infer{B \wedge A. \Gamma
			 \rightarrow \Delta}{A, \Gamma \rightarrow
			 \Delta}$$ 
			 $$\infer{\Gamma \rightarrow \Delta, A \wedge
			 B}{\Gamma \rightarrow \Delta, A \quad \Gamma
			 \rightarrow \Delta, B}$$
	      \item[$\vee$-rules] $$\infer{A \vee B, \Gamma \rightarrow
			 \Delta}{A, \Gamma \rightarrow \Delta \quad B,
			 \Gamma \rightarrow \Delta}$$
			 $$\infer{\Gamma \rightarrow \Delta, A \vee
			 B}{\Gamma \rightarrow \Delta, A}$$
			 $$\infer{\Gamma \rightarrow \Delta, B \vee
			 A}{\Gamma \rightarrow \Delta, A}$$
	      \item[$\forall$-rules] $$\infer{t \leq s, \forall x \leq s.
			 A(x), \Gamma \rightarrow \Delta}{A(t), \Gamma
			 \rightarrow \Delta}$$
			 $$\infer{Nt, \Gamma \rightarrow \Delta, \forall x
			 \leq t. A(x)}{x \leq t, \Gamma \rightarrow
			 \Delta, A(x)}$$ where $x$ does not appear in
			 $\Gamma, \Delta$ and $t$.
	      \item[$\exists$-rules] $$\infer{\exists x \leq t. A(x),
			 \Gamma \rightarrow \Delta}{x \leq t, A(x),
			 \Gamma \rightarrow \Delta}$$ where $x$ does not
			 appear in $\Gamma, \Delta$.
			 $$\infer{t \leq s, \Gamma \rightarrow \Delta,
			 \exists x \leq s. A(x)}{\Gamma \rightarrow
			 \Delta, A(t)}$$
	     \end{description}
  \item[Cut-rule] $$\infer{\Gamma, \Pi \rightarrow \Delta,
	     \Lambda}{\Gamma \rightarrow \Delta, A \quad
	     A, \Pi \rightarrow \Lambda}$$
 \end{description}
\end{definition}

\section{Truth definition of $S^0_2E$}\label{sec:truthdef}

\begin{definition}
 A tree $w$ is $\vec{b}$-valuation tree of a term $t(\vec{a})$ bounded by $u$ if and only if 
\begin{enumerate}
 \item All nodes of $w$ has a form $\langle \lceil t_0 \rceil, c
       \rangle$ where $\lceil t_0 \rceil$ is a G\"odel number of a
       subterm $t_0$ of $t$ and $c \leq u$.
 \item Leafs of $w$ are either in the form $\langle \lceil 0 \rceil, 0
       \rangle$ or $\langle \lceil a_i \rceil, b_i \rangle$.
 \item The root of $w$ has a form $\langle \lceil t(\vec{a}) \rceil, c
       \rangle$. 
 \item Child nodes of a node $\langle \lceil f(t_1, \dots, t_n) \rceil,
       c \rangle$ are $\langle \lceil t_1 \rceil, d_1 \rangle, \dots,
       \langle \lceil t_n \rceil, d_n \rangle$ and $c = f(d_1, \dots,
       d_n)$ holds.
\end{enumerate}

If the roof of $\vec{b}$-valuation tree $w$ has a form $\langle \lceil t
 \rceil, c \rangle$, we say \emph{the value of $w$ is $c$}.

We define $v(\lceil t \rceil, \vec{b}) \downarrow_u c
 \Leftrightarrow_{def} \exists w \leq s(\lceil t \rceil, u)$ ''w is
 a $\vec b$-valuation tree bounded by $u$ and the root of
 $w$ is $\langle \lceil t \rceil, c \rangle$'' where $s$ is a suitable
 term to bound the size of valuation trees of $t$ bounded by $u$.  Then,
 $v(\lceil t \rceil, \vec{b}) \downarrow_u c$ is a $\Sigma^b_1$-formula.
\end{definition}

\begin{lemma}\label{lem:v1}  The following statements are provable in $S^2_1$.
 \begin{enumerate}
  \item If $w$ is a $\vec{b}$-valuation tree bounded by $u$ and $u \leq
	u'$, $w$ is a $\vec{b}$-valuation tree bounded by $u'$.
  \item $v(\lceil t \rceil, \rho) \downarrow_u c$ and $u < u'$, then
	$v(\lceil t \rceil, \rho) \downarrow_u' c$.
  \item $v(\lceil t \rceil, \rho) \downarrow_u c$ and $v(\lceil t
	\rceil, \rho) \downarrow_u c'$, then $c = c'$.
 \end{enumerate}
\end{lemma}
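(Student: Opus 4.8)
The plan is to dispatch parts (1) and (2) by directly unfolding the definition of a valuation tree, and to prove part (3) by induction on the height of subterms.

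For part (1), observe that the bound $u$ enters the definition of ``$\vec{b}$-valuation tree bounded by $u$'' only through clause~1, via the conjunct $c \leq u$ on the value component of each node; clauses 2--4 do not mention $u$. Hence if $w$ is a valuation tree bounded by $u$ and $u \leq u'$, then transitivity of $\leq$ (which $S^2_1$ proves) gives $c \leq u \leq u'$ at every node, so $w$ is a valuation tree bounded by $u'$. This is a single bounded-universal check over the nodes of $w$ and needs no induction. Part (2) then follows: unfolding $v(\lceil t \rceil, \rho) \downarrow_u c$ produces a witness $w \leq s(\lceil t \rceil, u)$ that is a valuation tree bounded by $u$ with root $\langle \lceil t \rceil, c \rangle$; by part (1) the same $w$ is a valuation tree bounded by $u'$, and since the size bound $s$ is monotone in its bound argument and $u < u'$, we have $w \leq s(\lceil t \rceil, u')$, so $w$ witnesses $v(\lceil t \rceil, \rho) \downarrow_{u'} c$.

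Part (3) carries the real content. The idea is that values propagate bottom-up: a leaf is forced to carry $0$ (at a $\langle \lceil 0 \rceil, c \rangle$ node) or $b_i$ (at a $\langle \lceil a_i \rceil, c \rangle$ node) by clause~2, while an internal node for $f(t_1, \dots, t_n)$ must carry $f(d_1, \dots, d_n)$ with $d_j$ the values of its children by clause~4. I would formalize this as induction on a height parameter $h$, with induction formula $B(h)$: ``for every subterm $t_0$ of $t$ of height at most $h$, if $v(\lceil t_0 \rceil, \rho) \downarrow_u d$ and $v(\lceil t_0 \rceil, \rho) \downarrow_u d'$ then $d = d'$.'' The base case $h = 0$ is the leaf case above. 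For the step, given a subterm $f(t_1, \dots, t_n)$ of height $h+1$, each $t_j$ has height at most $h$; from the two witnessing trees for $t = f(t_1, \dots, t_n)$ I read off, as immediate subtrees, valuation trees of each $t_j$ bounded by $u$ and lying below the size bound $s(\lceil t_j \rceil, u)$, which therefore witness $v(\lceil t_j \rceil, \rho) \downarrow_u d_j$ and $v(\lceil t_j \rceil, \rho) \downarrow_u d_j'$. By $B(h)$ we get $d_j = d_j'$ for each $j$, whence the two root values satisfy $c = f(d_1, \dots, d_n) = f(d_1', \dots, d_n') = c'$. Taking $h$ to be the height of $t$ yields the claim.

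The step I expect to be the main obstacle is fitting this induction into the strength of $S^2_1$. Inducting by course-of-values directly on $\lceil t \rceil$ would require full $\Pi^b_1$ induction (roughly the strength of $T^1_2$), which is not available; this is avoided by inducting on the height, since the height of a term is bounded by its number of symbols and hence by $O(|\lceil t \rceil|)$, so only length-many induction steps occur. The formula $B(h)$ is $\Pi^b_1$, because ``$w$ is a valuation tree bounded by $u$'' is $\Delta^b_1$ in $S^2_1$ (the per-node check $c = f(d_1, \dots, d_n)$ involves only the bounded quantities recorded in the tree) and a bounded universal quantifier over subterms preserves $\Pi^b_1$. Since $S^2_1$ proves $\Sigma^b_1$-LIND, it also proves $\Pi^b_1$-LIND by the usual reversal (applying $\Sigma^b_1$-LIND to the complementary formula, counting down from the height of $t$), so $B$ satisfies the required induction in $S^2_1$. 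The only remaining routine verifications are that an immediate subtree of a valuation tree bounded by $u$ is again such a tree, and that it stays below the size bound $s$; both are straightforward facts about the tree and sequence coding that $S^2_1$ establishes.
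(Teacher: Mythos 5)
The paper offers no proof of Lemma \ref{lem:v1} at all --- the lemma is stated and then simply used --- so there is no proof of record to compare yours against; your argument supplies what the paper omits, and it is correct. Parts (1) and (2) are indeed immediate from the definition, with the one implicit assumption that the size-bounding term $s$ is monotone in its second argument (harmless: every term in the language of bounded arithmetic is nondecreasing in each argument, and the paper leaves $s$ unspecified in any case). For part (3) your two key moves are the right ones: inducting on the height of subterms rather than doing course-of-values induction on $\lceil t \rceil$, so that only length-many steps are needed and LIND suffices; and checking that the induction formula $B(h)$ is $\Pi^b_1$, with $\Pi^b_1$-LIND available in $S^2_1$ by the standard reversal from $\Sigma^b_1$-LIND (apply $\Sigma^b_1$-LIND to the complementary formula counting downward). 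Both claims are accurate, and the height of $t$ is indeed $O(|\lceil t \rceil|)$, so the induction reaches the root. The only items you dismiss as routine that genuinely depend on choices the paper never makes are (i) that a subtree of a coded tree has a code no larger than that of the whole tree, and (ii) that $s(\lceil t_j \rceil, u)$ bounds \emph{every} valuation tree of $t_j$ bounded by $u$, not merely some witness --- this is what lets you recognize the extracted subtrees as witnesses for $v(\lceil t_j \rceil, \rho) \downarrow_u d_j$. Both hold for any reasonable sequence coding and any canonical choice of $s$, but since the paper fixes neither, it would be cleaner to state them as explicit requirements on the coding and on $s$ rather than as facts to be verified afterward.
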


\begin{lemma}\label{lem:v2} The following statements are provable in $S^2_1$.
 \begin{enumerate}
  \item $v(\lceil f(t_1(\vec(a)), \dots, t_k(\vec(a)))
	\rceil, \vec{b}) \downarrow_u c$ then, $\exists d_1, \dots, d_k,
	v(\lceil t_1(\vec(a)) \rceil, \vec{b}) \downarrow_u d_1,$ $\ldots$
	$v(\lceil t_k(\vec(a)) \rceil, \vec{b}) \downarrow_u d_k$ and
	$f(d_1, \dots, d_k) = c$.
  \item $v(\lceil 0 \rceil, \vec{b}) \downarrow_0 0$
  \item $v(\lceil a_j \rceil, \vec{b}) \downarrow_{b_j} b_j$
  \item $v(\lceil t(\vec{a}, t'(\vec{a})) \rceil, \vec{b}) \downarrow_u c
	\leftrightarrow  \exists c' \leq u, v(\lceil t'(\vec{a}) \rceil,
	\vec{b})
	\downarrow_u c' \wedge v(\lceil t(\vec{a}, a) \rceil, \vec{b} *
	c') \downarrow_u c $
 \end{enumerate}
\end{lemma}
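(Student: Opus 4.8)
The plan is to treat the four parts separately, handling the first three by directly exhibiting or extracting witnessing trees and reserving the real work for the substitution equivalence in part~(4). For part~(2), I would exhibit the single-node tree whose only node is $\langle \lceil 0 \rceil, 0 \rangle$; it vacuously satisfies the child condition and its value is $0$, so it witnesses $v(\lceil 0 \rceil, \vec b) \downarrow_0 0$. Part~(3) is identical, using the single-node tree $\langle \lceil a_j \rceil, b_j \rangle$, which is a legitimate leaf and has all node-values bounded by $b_j$. For part~(1), given a $\vec b$-valuation tree $w$ with root $\langle \lceil f(t_1, \dots, t_k) \rceil, c \rangle$, the defining condition~(4) of a valuation tree tells me the root's children are exactly $\langle \lceil t_i \rceil, d_i \rangle$ with $c = f(d_1, \dots, d_k)$. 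I would read off the $d_i$ from these children and let $w_i$ be the subtree of $w$ rooted at the $i$-th child. Since subtree extraction is a polynomially bounded operation and every node of $w$ already has its value bounded by $u$, each $w_i$ is a $\vec b$-valuation tree of $t_i$ bounded by $u$, giving $v(\lceil t_i \rceil, \vec b) \downarrow_u d_i$ together with $f(d_1, \dots, d_k) = c$. All existential witnesses ($d_i$ and $w_i$) are bounded in terms of $w$ and $u$, so the argument stays within $\Sigma^b_1$.

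The substance is in part~(4). For the forward direction, suppose $w$ witnesses $v(\lceil t(\vec a, t'(\vec a)) \rceil, \vec b) \downarrow_u c$. The term $t'(\vec a)$ appears as a subterm of $t(\vec a, t'(\vec a))$ at one or more positions, and each corresponding subtree of $w$ is a $\vec b$-valuation tree of $t'(\vec a)$; extracting any one of them yields a value $c' \le u$ with $v(\lceil t'(\vec a) \rceil, \vec b) \downarrow_u c'$, and by the uniqueness part of Lemma~\ref{lem:v1} every such subtree carries the same value $c'$. I would then \emph{contract} $w$: replace each maximal subtree rooted at an occurrence of $t'(\vec a)$ by the single leaf $\langle \lceil a \rceil, c' \rangle$, where $a$ is the hole variable. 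The result is a $\vec b * c'$-valuation tree of $t(\vec a, a)$ with root value $c$, which gives $v(\lceil t(\vec a, a) \rceil, \vec b * c') \downarrow_u c$. For the backward direction I would run the inverse surgery: given a $\vec b$-valuation tree of $t'(\vec a)$ with value $c'$ and a $\vec b * c'$-valuation tree of $t(\vec a, a)$ with value $c$, \emph{expand} the latter by replacing each leaf $\langle \lceil a \rceil, c' \rangle$ with a copy of the former. A local check of condition~(4) at the splice points confirms the result is a $\vec b$-valuation tree of $t(\vec a, t'(\vec a))$ with value $c$.

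The hard part will be carrying out these two tree surgeries formally inside $S^2_1$ and, above all, controlling the size bound. Both contraction and expansion are polynomially bounded editing operations on the G\"odel number of a tree, so their totality and basic properties are available in $S^2_1$ via $\Sigma^b_1$-definability of polynomial-time functions; the correctness of the output, namely that it again satisfies conditions~(1)--(4), reduces to a sharply bounded verification over the nodes together with the parent--child relation. The genuinely delicate point is the expansion step, where copies of the tree for $t'(\vec a)$ are inserted at possibly many leaves: I must check that the resulting tree still fits under the bound $s(\lceil t(\vec a, t'(\vec a)) \rceil, u)$. This holds because no node value exceeding $u$ is ever introduced and the shape of the spliced tree mirrors the subterm structure of $t(\vec a, t'(\vec a))$, so the choice of the bounding term $s$ accommodates it; making this estimate precise, and thereby keeping the witnessing tree within a $\Sigma^b_1$ existential, is where the care is needed.
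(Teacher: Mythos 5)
The paper never proves Lemma~\ref{lem:v2} --- it is stated and then used (e.g.\ in the proof of Lemma~\ref{lem:T_0} and throughout Proposition~\ref{prop:soundness}) with no proof environment attached, evidently regarded as routine bookkeeping about valuation trees. So there is no paper proof to compare against; what you have written is the argument the author left implicit, and your overall plan (exhibit one-node trees for (2) and (3), extract root subtrees for (1), and do splice/unsplice surgery for (4)) is the natural and essentially correct way to discharge it in $S^2_1$, with the witnessing objects visibly polynomial-size so everything stays $\Sigma^b_1$.

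Two points need repair before your part~(4) is actually correct. First, your contraction as literally described --- ``replace each maximal subtree rooted at an occurrence of $t'(\vec a)$ by the leaf $\langle \lceil a \rceil, c' \rangle$'' --- does not yield a valuation tree of $t(\vec a, a)$: every ancestor of a spliced point still carries a label $\lceil s(\vec a, t'(\vec a)) \rceil$, a subterm of $t(\vec a, t'(\vec a))$, whereas condition~(1) of the valuation-tree definition requires labels that are subterms of $t(\vec a, a)$; moreover condition~(4) is checked against those labels. So the surgery must also rewrite every node label on the path above a splice, substituting $a$ for the designated occurrences of $t'(\vec a)$ (and conversely in the expansion direction). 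This is still a polynomial-time edit, so it costs nothing in $S^2_1$, but without it the constructed object simply fails the definition. Second, your forward direction tacitly assumes $t'(\vec a)$ occurs in $t(\vec a, t'(\vec a))$ at a substituted position (``at one or more positions''). If the hole variable $a$ does not occur in $t$, the left-hand side can hold while no valuation tree of $t'(\vec a)$ bounded by $u$ exists, and then the right-hand conjunct $\exists c' \leq u\, v(\lceil t'(\vec a) \rceil, \vec b) \downarrow_u c'$ is unobtainable. That degenerate case is a defect of the lemma as stated in the paper (or an implicit assumption that $a$ occurs in $t$), but your write-up should either invoke that assumption explicitly or note the restriction, since your proof inherits it.
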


\begin{definition}\label{defn:T_0}
 Assume that $\phi$ is a quantifier free formula of $S^0_2E$.  We define
 \emph{$\vec{b}$-truth tree bounded by $u$ of $\phi(\vec{a})$} as a tree $w$ satisfying
 the following condition.
\begin{enumerate}
 \item All nodes of $w$ has a form $\langle \lceil \psi \rceil,
       \epsilon \rangle$.
 \item The root of $w$ has a form $\langle \lceil \phi(\vec{a}) \rceil,
       \epsilon \rangle$.
 \item The leaf of $w$ has a form $\langle \lceil t_1 \leq t_2 \rceil,
       \epsilon \rangle$ or $\langle \lceil t_1 \not\leq t_2 \rceil,
       \epsilon \rangle$ or $\langle \lceil t_1 = t_2 \rceil, \epsilon
       \rangle$ or $\langle \lceil t_1 \not= t_2 \rceil, \epsilon
       \rangle$ or $\langle \lceil Et \rceil, \epsilon \rangle$.  For
       the case of that the leaf has a form $\langle \lceil t_1 \leq t_2
       \rceil, \epsilon \rangle$, $\epsilon = 1$ if and only if $\exists c_1, c_2
       \leq u$, $v(\lceil t_1 \rceil, \vec{b}) \downarrow_u c_1$,
       $v(\lceil t_2 \rceil, \vec{b}) \downarrow_u c_2$ and $c_1 \leq
       c_2$.  Otherwise $\epsilon = 0$.  For the case of that the leaf
       has a form $\langle \lceil t_1 \not\leq t_2 \rceil, \epsilon
       \rangle$, $\epsilon = 1$ if and only if $\exists c_1, c_2 \leq u$, $v(\lceil
       t_1 \rceil, \vec{b}) \downarrow_u c_1$, $v(\lceil t_2 \rceil,
       \vec{b}) \downarrow_u c_2$ and $c_1 \not\leq c_2$.  Otherwise
       $\epsilon = 0$.  For the cases of $\langle \lceil t_1 = t_2
       \rceil, \epsilon \rangle$ or $\langle \lceil t_1 \not= t_2
       \rceil, \epsilon \rangle$, the conditions are similar.  For the
       case that the leaf is $\langle \lceil E t \rceil, \epsilon
       \rangle$, $\epsilon = 1$ if and only if $\exists c \leq u$, $v(\lceil t
       \rceil, \vec{b}) \downarrow_u c$.  Otherwise $\epsilon = 0$.
 \item If the node $r$ of $w$ has a form $\langle \lceil \phi_1 \wedge
       \phi_2 \rceil, \epsilon \rangle$, $r$ has child nodes $\langle
       \lceil \phi_1 \rceil, \epsilon_1 \rangle$, $\langle \lceil \phi_2
       \rceil, \epsilon_2 \rangle$ and $\epsilon = 1$ if and only if $\epsilon_1 =
       1$ and $\epsilon_2 = 1$.  Otherwise $\epsilon = 0$.
 \item If the node $r$ of $w$ has a form $\langle \lceil \phi_1 \vee
       \phi_2 \rceil, \epsilon \rangle$, $r$ has child nodes $\langle
       \lceil \phi_1 \rceil, \epsilon_1 \rangle$, $\langle \lceil \phi_2
       \rceil, \epsilon_2 \rangle$ and $\epsilon = 1$ if and only if $\epsilon_1 =
       1$ or $\epsilon_2 = 1$.  Otherwise $\epsilon = 0$.
\end{enumerate}
 We define $T_0(u, \lceil \phi(\vec{a}) \rceil, \vec{b})
 \Leftrightarrow_{def} \exists w \leq s(\lceil \phi(\vec{a}) \rceil,
 u),$ ``$w$ is a $\vec{b}$-truth tree bounded by $u$ and the root of $w$
 has a form $\langle \lceil \phi(\vec{a}) \rceil, 1 \rangle$.'' where
 $s$ is a suitable term bound the size of $w$ by $t$ and $u$.  Then,
 $T_0(u, \lceil \phi(\vec{a}) \rceil, \vec{b})$ is a
 $\Sigma^b_1$-formula. 
\end{definition}

\begin{lemma}\label{lem:T_0}
 $S^1_2$ proves the following statements.
\begin{enumerate}
 \item $T_0(u, \lceil t_1 \leq t_2 \rceil, \vec{b}) \leftrightarrow \exists
       c_1, c_2 \leq u, v(\lceil t_1 \rceil, \vec{b}) \downarrow_u c_1 \wedge
       v(\lceil t_2 \rceil, \vec{b}) \downarrow_u c_2 \wedge c_1 \leq c_2$
 \item $T_0(u, \lceil t_1 \not\leq t_2 \rceil, \vec{b}) \leftrightarrow \exists
       c_1, c_2 \leq u, v(\lceil t_1 \rceil, \vec{b}) \downarrow_u c_1 \wedge
       v(\lceil t_2 \rceil, \vec{b}) \downarrow_u c_2 \wedge c_1
       \not\leq c_2$
 \item $T_0(u, \lceil t_1 = t_2 \rceil, \vec{b}) \leftrightarrow \exists
       c_1, c_2 \leq u, v(\lceil t_1 \rceil, \vec{b}) \downarrow_u c_1 \wedge
       v(\lceil t_2 \rceil, \vec{b}) \downarrow_u c_2 \wedge c_1
       = c_2$
 \item $T_0(u, \lceil t_1 \not= t_2 \rceil, \vec{b}) \leftrightarrow \exists
       c_1, c_2 \leq u, v(\lceil t_1 \rceil, \vec{b}) \downarrow_u c_1 \wedge
       v(\lceil t_2 \rceil, \vec{b}) \downarrow_u c_2 \wedge c_1
       \not= c_2$  
 \item $T_0(u, \lceil Et \rceil, \vec{b}) \leftrightarrow \exists
       c \leq u, v(\lceil t \rceil, \vec{b}) \downarrow_u c$
 \item $T_0(u, \lceil \phi_1 \wedge \phi_2 \rceil, \vec{b})
       \leftrightarrow T_0(u, \lceil \phi_1 \rceil, \vec{b}) \wedge
       T_0(u, \lceil \phi_2 \rceil, \vec{b})$
 \item $T_0(u, \lceil \phi_1 \vee \phi_2 \rceil, \vec{b})
       \leftrightarrow T_0(u, \lceil \phi_1 \rceil, \vec{b}) \vee
       T_0(u, \lceil \phi_2 \rceil, \vec{b})$
 \item $T_0(u, \lceil \phi(\vec{a}, t(\vec{a})) \rceil, \vec{b})
       \leftrightarrow \exists c \leq u, v(\lceil t(\vec{a}) \rceil,
       \vec{b}) \downarrow_u c \wedge T_0(u,
       \lceil \phi(\vec{a}, a) \rceil, \vec{b} * c)$ 
 \item $T_0(u, \lceil \phi \rceil, \vec{b}), u \leq u' \rightarrow
       T_0(u', \lceil \phi \rceil, \vec{b})$
\end{enumerate}
\end{lemma}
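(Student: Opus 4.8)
The plan is to establish all nine clauses uniformly by direct surgery on the truth-tree witnesses of Definition \ref{defn:T_0}, reducing each to the matching structural fact about valuation trees in Lemmas \ref{lem:v1} and \ref{lem:v2}. The guiding observation is that the branching \emph{shape} of a $\vec b$-truth tree of $\phi(\vec a)$ is fixed by the propositional structure of $\phi$ alone: since conditions (4)--(5) of Definition \ref{defn:T_0} branch only at $\wedge$- and $\vee$-nodes and never expand the terms inside an atomic formula, the shape is independent of $u$, of $\vec b$, and of which terms occur at the leaves. Only the labels $\epsilon$ and the leaf valuations depend on these parameters. The two basic operations I use—extracting the subtree rooted at a designated child, and grafting trees beneath a fresh root—are polynomial-time, hence $\Sigma^b_1$-definable, and the verification that they produce label-respecting trees is carried out by $\Sigma^b_1$-PIND on code length, which is available in $S^1_2$. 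The size term $s$ is chosen monotone and large enough to bound every truth tree of a given formula, so that extracted subtrees and grafted trees always fall within the intended bounds.

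Clauses (1)--(5) follow by merely unwinding the definition. A truth tree whose root carries the G\"odel number of an atomic formula must be the single-node tree, because conditions (4)--(5) supply children only to $\wedge$- and $\vee$-nodes; hence the root coincides with the unique leaf. The leaf clause of Definition \ref{defn:T_0} then states precisely that the label is $1$ if and only if the corresponding evaluation condition on $v$ holds, which is the asserted biconditional. No induction is needed here.

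For clauses (6)--(7) I argue in both directions. From a witness $w$ with root $\langle \lceil \phi_1 \wedge \phi_2 \rceil, 1 \rangle$, condition (4) forces the children to be $\langle \lceil \phi_i \rceil, \epsilon_i \rangle$ with $\epsilon_1 = \epsilon_2 = 1$; the subtrees hanging below them satisfy every local condition of Definition \ref{defn:T_0}, so they witness $T_0(u, \lceil \phi_i \rceil, \vec b)$. Conversely, given witnesses $w_1, w_2$ with root value $1$, I graft them beneath a new root $\langle \lceil \phi_1 \wedge \phi_2 \rceil, 1 \rangle$ and check conditions (1)--(5) pointwise; the $\vee$-case is identical with $\epsilon = \epsilon_1 \vee \epsilon_2$ at the root. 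Clause (9) is handled by relabeling: given a witness $w$ bounded by $u$ with $u \leq u'$, I keep the shape of $w$ and recompute every label bottom-up at bound $u'$. Each leaf label is monotone in the bound, for if it is $1$ at $u$ then by Lemma \ref{lem:v1}(2) the witnessing valuations persist to $u'$ with the same values—unique by Lemma \ref{lem:v1}(3)—and with the witnesses $c_1, c_2 \leq u \leq u'$, so the arithmetic comparison is unchanged and the label stays $1$. Since $\wedge$ and $\vee$ are monotone on $\{0,1\}$, a $\Sigma^b_1$-PIND on the tree shows the recomputed label dominates the old one at every node; in particular the new root label is $\geq 1$, hence $1$, giving $T_0(u', \lceil \phi \rceil, \vec b)$.

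The substantive clause is (8), and it is where I expect the main obstacle. The two trees in play—a $\vec b$-truth tree of $\phi(\vec a, t(\vec a))$ and a $\vec b * c$-truth tree of $\phi(\vec a, a)$—share the same branching shape, since substitution inside terms does not alter the propositional structure of $\phi$; they differ only in that each leaf of the former evaluates a term $r(\vec a, t(\vec a))$ while the matching leaf of the latter evaluates $r(\vec a, a)$ under the extended assignment. By Lemma \ref{lem:v2}(4), once $c$ is the value of $t(\vec a)$ these two valuations return the same result, so the leaf labels agree and the two trees carry identical labels; thus one has root value $1$ exactly when the other does. In the forward direction I must first produce the witnessing $c$: uniqueness (Lemma \ref{lem:v1}(3)) guarantees that whichever leaf mentions $t(\vec a)$ pins down a single value $c \leq u$, and Lemma \ref{lem:v2}(4) then relabels every leaf coherently, while the backward direction re-expands the valuations by the same identity. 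The delicate points are that one value $c$ must serve all leaves simultaneously, secured by uniqueness, and that the relabeling is uniformly $\Sigma^b_1$ and stays within $s(\lceil \phi \rceil, u)$, secured by the monotone choice of $s$. The degenerate case in which $a$ does not occur in $\phi$—so that some leaves are untouched by the substitution and their valuations are unaffected by the extra assignment coordinate—is treated by the convention that $a$ is a genuine free variable of $\phi(\vec a, a)$, so that at least one leaf always fixes $c$.
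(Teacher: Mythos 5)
Your treatment of clauses (1)--(7) and (9) is sound and matches what the paper leaves implicit, and for clause (8) your shape-preservation-plus-relabeling argument is essentially the paper's own proof (induction via clauses (6), (7) down to the atomic case, then Clause (4) of Lemma \ref{lem:v2}) in flattened form. The genuine gap is in the forward direction of (8), at exactly the step you flag: the extraction of the witness $c$. You claim that ``whichever leaf mentions $t(\vec a)$ pins down a single value $c \leq u$,'' but a leaf mentioning $t(\vec a)$ pins down a value only when its label is $1$; by Definition \ref{defn:T_0} a label of $0$ is satisfied precisely when the required valuations fail to exist within the bound $u$. Since a $\vee$-node propagates a $1$ from either child, the root can carry $1$ while every leaf that mentions $t(\vec a)$ carries $0$. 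Concretely, take $\phi(\vec a, a)$ to be $(0 = 0) \vee E a$ and let $t(\vec a)$ be any term with no valuation bounded by $u$ (e.g.\ one whose value exceeds $u$). The truth tree with leaves $\langle \lceil 0 = 0 \rceil, 1 \rangle$ and $\langle \lceil E t(\vec a) \rceil, 0 \rangle$ witnesses $T_0(u, \lceil \phi(\vec a, t(\vec a)) \rceil, \vec b)$, yet no $c \leq u$ satisfies $v(\lceil t(\vec a) \rceil, \vec b) \downarrow_u c$, so the right-hand side of (8) fails. Your closing ``convention'' that $a$ be a genuine free variable of $\phi$ does not exclude this case: $a$ does occur in $\phi$, just inside a disjunct whose leaf is labelled $0$.

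Be aware that this is not only a defect of your patch: the same counterexample defeats the paper's own proof sketch, whose induction needs clause (8) for every atomic subformula of $\phi$, including $t$-free ones such as $0 = 0$, for which the forward implication is false whenever $t(\vec a)$ has no value below $u$. The appeal to Lemma \ref{lem:v2}(4) settles the atomic case only for atoms in which $a$ actually occurs, since there a label of $1$ forces all subterm valuations, including that of $t(\vec a)$, to exist. So the honest conclusion is that clause (8) holds in general only in the backward direction, and holds in both directions under the additional hypothesis $\exists c \leq u\, v(\lceil t(\vec a) \rceil, \vec b) \downarrow_u c$ (or under the restriction that $a$ occurs in every atomic subformula of $\phi$); your relabeling argument, like the paper's induction, becomes correct once that hypothesis is added, because then every leaf label transfers coherently between the two trees.
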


\begin{proof}
 Only (8) is non-trivial.  (8) is proved by induction on the
 construction of $\phi$.  Using (6), (7), it is proved by considering
 the case where $\phi$ is atomic.  But if $\phi$ is atomic, the proof is
 handled by Clause (4) of Lemma \ref{lem:v2}.
\end{proof}

\begin{lemma}\label{lem:EM}
If $T_0(u, \lceil t_1 \rceil, \rho), \dots, T_0(u, \lceil t_1 \rceil,
\rho)$, then either $T(u, \lceil p(t_1, \cdots, t_n) \rceil, \rho)$ or
 $T(u, \lceil \neg p(t_1, \cdots, t_n) \rceil, \rho)$ holds.
\end{lemma}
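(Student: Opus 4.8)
The plan is to read the hypothesis in its intended form, namely that every argument term converges under $\rho$ within the bound $u$ --- that is, that $T_0(u, \lceil E t_i \rceil, \rho)$ holds for each $i = 1, \dots, n$ --- and to read the $T$ in the conclusion as the predicate $T_0$ of Definition \ref{defn:T_0}. With these readings the statement is a formalised law of excluded middle for atomic formulas, and I would prove it by a finite case analysis on the predicate symbol $p$, which ranges only over $E$, $\leq$ and $=$. In each case the argument is reduced to the explicit characterisations of $T_0$ on atomic formulas given by Lemma \ref{lem:T_0}, together with the decidability of the ground relations, which $S^1_2$ proves: for any numbers $c_1, c_2$ one has $c_1 \leq c_2 \vee c_1 \not\leq c_2$ and $c_1 = c_2 \vee c_1 \neq c_2$.

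The case $p = E$ is immediate, since its positive disjunct $T_0(u, \lceil E t_1 \rceil, \rho)$ is literally one of the hypotheses. For $p = {\leq}$ I would first apply Clause (5) of Lemma \ref{lem:T_0} to the hypotheses $T_0(u, \lceil E t_1 \rceil, \rho)$ and $T_0(u, \lceil E t_2 \rceil, \rho)$, extracting witnesses $c_1, c_2 \leq u$ with $v(\lceil t_1 \rceil, \rho) \downarrow_u c_1$ and $v(\lceil t_2 \rceil, \rho) \downarrow_u c_2$. Invoking $c_1 \leq c_2 \vee c_1 \not\leq c_2$, the same pair of witnesses then satisfies the right-hand side of either Clause (1) or Clause (2) of Lemma \ref{lem:T_0}, yielding $T_0(u, \lceil t_1 \leq t_2 \rceil, \rho)$ or $T_0(u, \lceil t_1 \not\leq t_2 \rceil, \rho)$ respectively. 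The case $p = {=}$ is identical, using Clauses (3) and (4) in place of (1) and (2) and the dichotomy $c_1 = c_2 \vee c_1 \neq c_2$.

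I do not anticipate a substantial obstacle: the entire content is the reduction, via Lemma \ref{lem:T_0}, to decidability of $\leq$ and $=$ on genuine numbers, which is routine in $S^1_2$. Two minor points call for care. First, proving a disjunction only requires exhibiting some witnesses $c_1, c_2$, so uniqueness of values (Lemma \ref{lem:v1}(3)) is not strictly needed, although it guarantees that the chosen witnesses are in fact canonical. Second, the leaf clause of Definition \ref{defn:T_0} assigns a truth value to $E t$ but not to $\neg E t$; the case analysis is therefore arranged so that for $p = E$ only the positive disjunct is ever used, while for $p = {\leq}$ and $p = {=}$ the negated literals appear in the dedicated forms $t_1 \not\leq t_2$ and $t_1 \neq t_2$, both covered by Lemma \ref{lem:T_0}.
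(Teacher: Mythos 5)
Your proof is correct. The paper in fact states Lemma \ref{lem:EM} with no proof at all, and your argument --- reading the hypotheses as $T_0(u, \lceil Et_i \rceil, \rho)$ (which matches how the lemma is invoked in the $\neg$-rule case of Proposition \ref{prop:soundness}), identifying $T$ with $T_0$ on quantifier-free formulas via the third clause of Definition \ref{defn:T}, and then doing the finite case split on $p \in \{E, \leq, =\}$ using Lemma \ref{lem:T_0} and excluded middle on values --- is exactly the routine argument the paper leaves implicit, with your two side remarks (uniqueness of values is not needed, and $\neg Et$ lies outside the scope of Definition \ref{defn:T_0}, so the $E$ case must rest on the positive disjunct alone) both accurate.
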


\begin{definition}
 $\phi(\vec{a})$ is called \emph{pure 1-form} if and only if it has a
 form $$\exists x_1 \leq t_1(\vec{a}) \forall x_2 \leq |t_2(\vec{a},
 x_1)| A(\vec{a}, x_1, x_2)$$ where $A$ is quantifier-free and does not
 contain predicate $E$.
 
 Formula $\psi$ are called \emph{1-form} if it is subformula of a formula
 in pure 1-form, i.e.
\begin{itemize}
 \item $\psi$ is in pure 1-form, or
 \item $\psi$ has a form $\forall x \leq |t(\vec{a})| A(\vec{a}, x)$ where
       $A$ is quantifier-free and does not contain predicate $E$, or
 \item $\psi$ is quantifier-free and does not contain predicate $E$.
\end{itemize}
\end{definition}

\begin{definition}\label{defn:T}
 $T(u, \lceil \phi(\vec{a}) \rceil, \vec{b})$ is defined as the formula
 stating ``$\lceil \phi(\vec{a}) \rceil$ is a G\"odel number of 1-form
 or formula in a form $Et$ and one of the following clauses holds.
\begin{itemize}
 \item $\phi(\vec{a})$ is in the form $\exists x_1 \leq t_1(\vec{a})
       \forall x_2 \leq |t_2(\vec{a}, x_1)| A(\vec{a}, x_1,
       x_2)$. Moreover $\exists c \leq u, v(\lceil t_1(\vec{a}) \rceil,
       \vec{b}) \downarrow_u c$ and $\exists x_1 \leq c, \exists d \leq
       u, v(\lceil t_2(\vec{a}, x_1) \rceil, \vec{b} * x_1) \downarrow_u
       d$ and $\forall x_2 \leq |d|, T_0(u, \lceil A(\vec{a}, x_1, x_2)
       \rceil, \vec{b}*x_1*x_2)$
       holds. 
 \item $\phi(\vec{a})$ is in the form $ \forall x \leq |t(\vec{a})|
       A(\vec{a}, x)$. Moreover $\exists c \leq u, v(\lceil t(\vec{a})
       \rceil, \vec{b}) \downarrow_u c$ and $\forall x \leq |c|, T_0(u,
       \lceil A(\vec{a}, x) \rceil, \vec{b}*x)$ holds.
 \item $\phi(\vec{a})$ is quantifier-free.  Then $T_0(u, \lceil
       \phi(\vec{a}) \rceil, \vec{b})$ holds.''
\end{itemize}
 $T(u, \lceil \phi(\vec{a}) \rceil, \vec{b})$ is $\Sigma^b_1$-formula.
\end{definition}

\begin{lemma}\label{lem:T}
 $S^1_2$ proves $T(u, \lceil \phi(\vec{a}) \rceil, \vec{b}) \wedge u
 \leq u' \rightarrow T(u', \lceil \phi(\vec{a}) \rceil, \vec{b})$ 
\end{lemma}

\section{Soundness and consistency proof of $S^0_2E$ in $S^2_2$}\label{sec:soundness}

\begin{definition}
 A proof $p$ of $S^0_2E$ is \emph{strictly 1-normal} if and only if
\begin{enumerate}
 \item All formulas of $p$ is either 1-form or has a form $Et$.
 \item $p$ is a free variable normal form.
\end{enumerate}
 The property ``$w$ is a G\"odel number of strictly 1-normal proof of
 sequent $\Gamma \rightarrow \Delta$'' is $\Delta^b_1$-definable
 property.  We write $1-sPrf(w, \lceil \Gamma \rightarrow \Delta
 \rceil)$ the formula representing this property.
\end{definition}

\begin{proposition}\label{prop:soundness}
 Assume $1-sPrf(w, \lceil \Gamma \rightarrow \Delta \rceil)$.  For each
 node $r$ of $w$, we write the sequent of this node $\Gamma_r
 \rightarrow \Delta_r$ and number of parameter variables in $\Gamma_r
 \rightarrow \Delta_r$ $k_r$.  Then, for each node $r$ of $w$ and any
 $u$, the following hold.

\begin{multline}\label{eq:soundness}
  \forall \rho \leq u \# 2^{k_r} (seq(\rho) \wedge Len(\rho) = k_r
 \wedge \forall i < k_r (\beta(i+1, \rho) \leq u)) \supset \\
 \forall u'
 \leq u \circleddash r \forall A \in \Gamma_r \ T(u', \lceil A \rceil, \rho)
\supset \exists B \in \Delta_r \ T(u' \oplus r, \lceil B \rceil,
\rho)\end{multline}

where $\circleddash$ is a bit-subtraction and $\oplus$ is a
 bit-concatenation.

Furthermore, this is derivable in $S^2_2$.
\end{proposition}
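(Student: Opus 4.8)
The plan is to argue by induction on the height of the node $r$ inside the fixed proof tree $w$, reducing the global claim to the \emph{local soundness} of each inference rule together with the base cases. Concretely, I would prove by induction on $h$ the statement $P(h)$ asserting that \eqref{eq:soundness} holds at every node $r$ of height at most $h$. Since $T$ is $\Sigma^b_1$ (Definition \ref{defn:T}), the sequent-truth matrix $\forall A \in \Gamma_r\, T(u',\lceil A\rceil,\rho)\supset\exists B\in\Delta_r\, T(u'\oplus r,\lceil B\rceil,\rho)$ is a Boolean combination of $\Sigma^b_1$-formulas, and prefixing the bounded universal quantifiers over $\rho$, over $u'$, and over the nodes of height at most $h$ keeps $P(h)$ within $\Pi^b_2$. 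Thus the induction is a $\Pi^b_2$ length-induction, available in $S^2_2$ but not in $S^2_1$; this is exactly where the jump from $S^2_1$ (used for the auxiliary Lemmas \ref{lem:v1}--\ref{lem:T}) to $S^2_2$ is forced.

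For the base case $P(0)$ the node $r$ is a leaf, hence either an instance of the identity rule $a\rightarrow a$ or a substitution instance of one of the axioms of Definition \ref{defn:axiom}. The identity case is immediate from the monotonicity of $T$ (Lemma \ref{lem:T}), which lets me pass from the antecedent bound $u'$ to the succedent bound $u'\oplus r\ge u'$. For the axioms I would dispatch the groups in turn: the E-, equality-, separation- and inequality-axioms reduce, via the leaf clauses of Definition \ref{defn:T_0} and the characterisations in Lemma \ref{lem:T_0}(1)--(5), to elementary facts about the valuation predicate $v(\cdot)\downarrow_u(\cdot)$; the defining axioms for $Cond,S,|\ |,\lfloor\frac{}{2}\rfloor,\boxplus,\#,parity,+,\cdot$ reduce, via the compositionality and substitution clauses Lemma \ref{lem:v2}(1),(4), to checking that the valuation trees of the two sides evaluate to a common value. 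These verifications are computationally heavy but routine, and each stays at the $\Sigma^b_1$ level.

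For the inductive step I assume $P(h)$ and treat a node $r$ of height $h+1$ by cases on its last inference, using the induction hypothesis at the premises (which are children of height at most $h$). The structural rules are immediate modulo monotonicity of $T$; the $\wedge$- and $\vee$-rules follow from Lemma \ref{lem:T_0}(6),(7) by a case split; the $\neg$-rules use that $T_0$ of an atom and of its negation are mutually exclusive once the relevant values exist, which is furnished by Lemma \ref{lem:T_0} and Lemma \ref{lem:EM}; and the bounded-quantifier rules are handled by the substitution clause Lemma \ref{lem:T_0}(8) together with the definition of $T$ in Definition \ref{defn:T}, evaluating the bounding term through Lemma \ref{lem:v2}. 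The cut rule needs no excluded middle: from the left premise the antecedent forces either some formula of $\Delta_r$ to be true, in which case we are done, or the cut formula $A$ to be true, and in the latter case the right premise delivers a true formula of $\Lambda$; the two sub-conclusions are reconciled at the common bound by Lemma \ref{lem:T}.

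The main obstacle I anticipate is not any single rule but the uniform management of the bit-budget encoded by $\circleddash$ and $\oplus$. Passing from a child $r'$ to its parent $r$ changes both the admissible range $u\circleddash r$ of the inner bound and the succedent bound $u'\oplus r$, and at every rule one must check that the premise instance can be taken at a budget compatible with the conclusion, invoking the monotonicity Lemma \ref{lem:T} in the correct direction. Getting this bookkeeping to close simultaneously for all rules, while keeping each intermediate assertion $\Sigma^b_1$ (so that $P(h)$ remains $\Pi^b_2$ and the $S^2_2$ induction applies), is the delicate technical heart of the argument; the quantifier rules, which additionally extend the environment $\rho$ by an eigenvalue and thereby raise $k_r$, are where I expect this bookkeeping to be most error-prone.
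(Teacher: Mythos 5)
Your proposal follows essentially the same route as the paper: an induction over the proof tree (the paper's ``tree induction on $w$'', your induction on node height) carried out inside $S^2_2$, with the same case analysis on the last inference, the same use of Lemma \ref{lem:v1}, Lemma \ref{lem:v2}, Lemma \ref{lem:T_0}, Lemma \ref{lem:EM} for the axiom and logical-rule cases, the same monotonicity bookkeeping via Lemma \ref{lem:T} for the $\circleddash$/$\oplus$ bounds, and the same excluded-middle-free treatment of cut. The only discrepancy is cosmetic: the paper calls \eqref{eq:soundness} a $\Sigma^b_2$-formula whereas you place the induction statement in $\Pi^b_2$; either way the induction is formalizable in $S^2_2$, so the argument is unaffected.
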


\begin{proof}
 Tree induction on $w$.  Note that the fo]rmula (\ref{eq:soundness}) is
 $\Sigma^b_2$-formula.  Hence, $S^2_2$ can formalize this induction.  We
 reason informally inside $S^2_2$.

 We distinguish different cases of the
 inference deriving $\Gamma_r \rightarrow \Delta_r$.

\begin{description}
 \item[Identity rule] 

	     $$\infer{a \rightarrow a}{}$$

	    Let $\rho \leq u \# 2^{k_r}$ and assume $Seq(\rho)$,
	    $Len(\rho)=k_r$, $\forall i < k_r \beta(i+1, \rho) \leq u$.
	    Let $u' \leq u \circleddash r$.  Further, assume that $T(u',
	    \lceil a \rceil, \rho)$.  Then, by Lemma \ref{lem:T}, $T(u'
	    \oplus r, \lceil a \rceil, \rho)$.  Hence, $r$ satisfies
	    (\ref{eq:soundness}).

 \item[Axioms] We distinguish different cases based on which axiom the
	    sequent is a substitution instance of.  Let $\rho \leq u \#
	    2^{k_r}$ and assume $Seq(\rho)$, $Len(\rho)=k_r$, $\forall i <
	    k_r \beta(i+1, \rho) \leq u$.  Let $u' \leq u \circleddash
	    r$.
	    \begin{description}
	     \item[E-axioms]

			$$\rightarrow E0$$

			Since $v(\lceil 0 \rceil, \rho)\downarrow_0 0$,
			$T(u' \oplus r, \lceil E0 \rceil, \rho)$.  Hence
			$r$ satisfies (\ref{eq:soundness}).

			$$Et \rightarrow Es_it$$ for $i = 0, 1$.  Assume
			$T(u', \lceil Et \rceil, \rho)$ i.e. $\exists c
			\leq u' v(\lceil t \rceil, \rho) \downarrow_{u'}
			c$.  Then $v(\lceil s_i t \rceil, \rho)
			\downarrow_{s_i u'} s_i c$.  Hence $T(s_i u',
			\lceil Es_i t \rceil, \rho)$.  Since $s_i u'
			\leq u' \oplus r$, $T(u' \oplus r, \lceil Es_i t
			\rceil, \rho)$.  Hence $r$ satisfies
			(\ref{eq:soundness}).

			$$p t_1 \dots t_n \rightarrow Et_i$$ for $i = 1,
			\dots, n$.  Assume $T(u', \lceil p t_1 \dots t_n
			\rceil, \rho)$.  By definition \ref{defn:T} of
			$T$, $T_0(u', \lceil p t_1 \dots t_n \rceil,
			\rho)$.  By definition \ref{defn:T_0} of $T_0$,
			there is $\rho$-truth tree $w$ bounded by $u$.
			Only node of $w$ consists of $\langle \lceil p
			t_1 \dots t_n \rceil, 1 \rangle$.  By definition
			it is the case only when $\exists c \leq u'
			v(\lceil t_i \rceil, \rho)\downarrow_{u'} c$.
			Hence $T(u', \lceil Et_i \rceil, \rho)$.  By
			Lemma \ref{lem:T}, $T(u' \oplus r, \lceil Et_i
			\rceil, \rho)$.  Hence $r$ satisfies
			(\ref{eq:soundness}).

			The case $$\neg p t_1 \dots t_n \rightarrow
			Et_i$$ is treated similarly as above.
	     \item[Equality axioms] $$Et \rightarrow t=t$$ Assume that
			$T(u', \lceil Et \rceil, \rho)$.  Then, $\exists
			c \leq u' v(\lceil t \rceil,
			\rho)\downarrow_{u'} c$.  
			Since $c=c$,  there is $\rho$-truth tree $w$
			bounded by $u$ which consists of single node
			$\langle \lceil t=t \rceil, 1 \rangle$.  Hence
			$T_0(u', \lceil t=t \rceil, \rho)$ and therefore
			$T(u', \lceil t=t \rceil, \rho)$.  By Lemma
			\ref{lem:T}, $T(u' \oplus r, \lceil t=t \rceil,
			\rho)$.  Hence $r$ satisfies
			(\ref{eq:soundness}).

			$$t_1=t_2, t_2=t_3 \rightarrow t_1=t_3$$ Assume
			that $T(u', \lceil t_1=t_2 \rceil, \rho)$ and
			$T(u', \lceil t_2=t_3 \rceil, \rho)$.  Then,
			$\exists c1 \leq u' v(\lceil t_1 \rceil,
			\rho)\downarrow_{u'} c_1$, $\exists c2 \leq u'
			v(\lceil t_2 \rceil, \rho)\downarrow_{u'} c_2$,
			$\exists c3 \leq u' v(\lceil t_3 \rceil,
			\rho)\downarrow_{u'} c_3$ and $c_1 = c_2$, $c_2
			= c_3$.  Hence $c_1 = c_3$.  Since $v(\lceil t_1
			\rceil, \rho)\downarrow_{u'} c_1$ and $v(\lceil
			t_3 \rceil, \rho)\downarrow_{u'} c_3$, $T(u',
			\lceil t_1=t_3 \rceil, \rho)$.  By Lemma
			\ref{lem:T}, $T(u' \oplus r, \lceil t_1=t_3
			\rceil, \rho)$.  Hence $r$ satisfies
			(\ref{eq:soundness}).
			
			$$t_1=t_2 \rightarrow s_i t_1 = s_i t_2$$ where
			$i = 0, 1$.  Assume $T(u', \lceil t_1=t_2
			\rceil, \rho)$.  Then, $\exists c1 \leq u'
			v(\lceil t_1 \rceil, \rho)\downarrow_{u'} c_1$,
			$\exists c2 \leq u' v(\lceil t_2 \rceil, \rho)
			\downarrow_{u'} c_2$ and $c_1 = c_2$.  Hence
			$v(\lceil s_i t_1 \rceil, \rho)\downarrow_{s_i
			u'} s_i c_1$, $v(\lceil s_i t_2 \rceil,
			\rho)\downarrow_{s_i u'} s_i c_2$ and $s_i c_1 =
			s_i c_2$ can be proved by the axiom of $S^2_2$.
			Therefore, $T(s_i u', \lceil s_i t_1 = s_i t_2
			\rceil, \rho)$.  By $s_i u' \leq u' \oplus r$,
			by Lemma \ref{lem:T}, $T(u' \oplus r, \lceil s_i
			t_1 = s_i t_2 \rceil, \rho)$.  Hence $r$
			satisfies (\ref{eq:soundness}).

	     \item[Separation axioms] $$Et \rightarrow t \not= s_1t$$
			Assume $T(u', \lceil Et \rceil, \rho)$
			i.e. $\exists c \leq u' v(\lceil t \rceil, \rho)
			\downarrow_{u'} c$.  Thus $v(\lceil s_1 t
			\rceil, \rho) \downarrow_{s_1 u'} s_1 c$ and by
			clause (2) of Lemma \ref{lem:v1}, $v(\lceil t
			\rceil, \rho) \downarrow_{s_1 u'} c$.  Since
			 $c \not= s_1 c$, $T(s_1 u',
			\lceil t \not= s_1 t \rceil, \rho)$.  Since $s_1
			u' \leq u' \oplus r$ and Lemma \ref{lem:T},
			$T(u' \oplus r, \lceil t \not= s_1 t \rceil,
			\rho)$.

			The cases of $s_0$ ad $Et \rightarrow s_0 t
			\not= s_1 t$ are similar.

	     \item[Inequality axioms] $$Et \rightarrow 0 \leq t$$ Assume
			$T(u', \lceil Et \rceil, \rho)$ i.e. $\exists c
			\leq u' v(\lceil t \rceil, \rho) \downarrow_{u'}
			c$.  $0 \leq c$, and hence $T(u', \lceil 0 \leq t
			\rceil, \rho)$.  By Lemma \ref{lem:T}, $T(u'
			\oplus r, \lceil 0 \leq t \rceil, \rho)$.

			$$t_1 \leq t_2 \rightarrow s_i t_1 \leq s_i
			t_2$$ where $i = 0, 1$.  Assume $T(u', \lceil
			t_1 \leq t_2 \rceil, \rho)$.  Then, $\exists c_1
			\leq u' v(\lceil t_1 \rceil, \rho)
			\downarrow_{u'} c_1$, $\exists c_2 \leq u'
			v(\lceil t_2 \rceil, \rho) \downarrow_{u'} c_2$
			and $c_1 \leq c_2$.  Hence, $v(\lceil s_i t_1
			\rceil, \rho) \downarrow_{s_i u'} s_i c_1$ and
			$v(\lceil s_i t_2 \rceil, \rho) \downarrow_{s_i
			u'} s_i c_2$.  Since $s_i c_1 \leq s_i c_2$,
			$T(s_i u', \lceil s_i t_1 \leq s_i t_2 \rceil,
			\rho)$.  Since $s_i u' \leq u' \oplus r$, $T(u'
			\oplus r, \lceil s_i t_1 \leq s_i t_2 \rceil,
			\rho)$.
			
			$$t_1 \leq t_2 \rightarrow s_0 t_1 \leq s_1
			t_2$$ where $i = 0, 1$.  Assume $T(u', \lceil
			t_1 \leq t_2 \rceil, \rho)$.  Then, $\exists c_1
			\leq u' v(\lceil t_1 \rceil, \rho)
			\downarrow_{u'} c_1$, $\exists c_2 \leq u'
			v(\lceil t_2 \rceil, \rho) \downarrow_{u'} c_2$
			and $c_1 \leq c_2$.  Hence, $v(\lceil s_0 t_1
			\rceil, \rho) \downarrow_{s_1 u'} s_0 c_1$ and
			$v(\lceil s_1 t_2 \rceil, \rho) \downarrow_{s_1
			u'} s_1 c_2$.  Since $s_0 c_1 \leq s_1 c_2$,
			$T(s_1 u', \lceil s_0 t_1 \leq s_1 t_2 \rceil,
			\rho)$.  Since $s_1 u' \leq u' \oplus r$, $T(u'
			\oplus r, \lceil s_0 t_1 \leq s_1 t_2 \rceil,
			\rho)$.
	     \item[Defining axioms $Cond$] $$Et_1, Et_2 \rightarrow Cond
			(0, t_1, t_2) = t_1$$ Assume that $T(u', \lceil
			Et_1 \rceil, \rho)$ and $T(u', \lceil Et_2
			\rceil, \rho)$.  Therefore we have a
			$\rho$-evaluation tree $w_1$ of term $t_1$ and
			$w_2$ of $t_2$.  Using $w_1$ and $w_2$, we can
			construct $\rho$-evaluation tree of $Cond(0,
			t_1, t_2)$.  Hence, $\exists c \leq u' v(\lceil
			Cond(0, t_1, t_2) \rceil, \rho) \downarrow_{u'}
			c$.  By clause (1) of Lemma \ref{lem:v2},
			$v(\lceil t_1 \rceil, \rho) \downarrow_{u'} d$
			and $d = c$.  Hence, $T(u', \lceil Cond (0, t_1,
			t_2) = t_1 \rceil, \rho)$ holds.  Since $u' \leq
			u' \oplus r$ and Lemma \ref{lem:T}, we have
			$T(u' \oplus r, \lceil Cond (0, t_1, t_2) = t_1
			\rceil, \rho)$.

			$$ECond(t_1, t_2, t_3) \rightarrow Cond (s_0
			t_1, t_2, t_3) = Cond (t_1, t_2, t_3)$$ Assume
			that $T(u', \lceil ECond(t_1, t_2, t_3) \rceil,
			\rho)$.  Then, we have a $\rho$-valuation tree
			of $t_1, t_2, t_3$ bounded by
			$u'$ respectively.  Therefore, we can construct
			a $\rho$-valuation tree of $Cond (s_0 t_1,
			t_2, t_3)$ bounded by $s_0 u'$.  Hence we have
			$\exists c_1 \leq s_0 u' v(\lceil Cond (s_0 t_1,
			t_2, t_3) \rceil, \rho)\downarrow_{s_0u'} c_1$.
			Moreover $\exists c_2 \leq s_0 u' v(\lceil Cond
			(t_1, t_2, t_3) \rceil, \rho)\downarrow_{s_0u'}
			c_2$ by clause (2) of Lemma \ref{lem:v1}. By
			clause (1) of Lemma \ref{lem:v2}, $c_1 = c_2$.
			Hence $T(s_0 u' \lceil Cond (s_0 t_1, t_2, t_3)
			= Cond (t_1, t_2, t_3) \rceil, \rho)$.  By $s_0
			u' \leq u' \oplus r$ and Lemma \ref{lem:T},
			we have $T(u' \oplus r, \lceil Cond (s_0 t_1, t_2, t_3)
			= Cond (t_1, t_2, t_3) \rceil, \rho)$.

			$$Et_1, Et_2, Et_3 \rightarrow Cond (s_1 t_1,
			t_2, t_3) = t_3$$ Assume $T(u', \lceil Et_1
			\rceil, \rho)$, $T(u', \lceil Et_2 \rceil,
			\rho)$, $T(u', \lceil Et_3 \rceil, \rho)$.
			Then, we have a $\rho$-valuation $w_1, w_2, w_3$
			tree of $t_1, t_2, t_3$ bounded by $u'$
			respectively.  Hence, 
			$\rho$-valuation tree $w$ of $Cond(s_1 t_1, t_2,
			t_3)$ bounded by $s_1 u'$ can be constructed
			from $w_1, w_2, w_3$.  Let $c$ be the value
			of $w$.  By clause (1) of Lemma \ref{lem:v1},
			$w_3$ is still a $\rho$-valuation tree bounded
			by $s_1 u'$.  Hence $T(s_1 u', \lceil Cond (s_1
			t_1, t_2, t_3) = t_3 \rceil, \rho)$.  Since $s_1
			u' \leq u' \oplus r$ and Lemma \ref{lem:T},
			$T(u' \oplus r, \lceil Cond (s_1 t_1, t_2, t_3)
			= t_3 \rceil, \rho)$.

	     \item[Defining axioms $S$] 
			  $$\rightarrow S0 = s_10$$  Since $T(1, \lceil
			S0 = s_10 \rceil, \rho)$, we have done.

			  $$Es_1t \rightarrow Ss_0t = s_1t$$

			By $T(u', \lceil Es_1t \rceil, \rho)$, we have
			an $\rho$-valuation tree $w$ of $s_1 t$ bounded
			by $u'$.  Using $w$, we can construct
			$\rho$-valuation tree of $Ss_0t$ and $s_1t$
			bounded by $u'$.  By clause (1) of Lemma
			\ref{lem:v2}, values of both trees are equal.
			Hence $T(u', \lceil Ss_0t = s_1t \rceil, \rho)$.
			Since $u' \leq u' \oplus r$, by Lemma \ref{lem:T},
			we have done.

			$$ESt \rightarrow Ss_1t = s_0(St)$$

			By $T(u', \lceil ESt \rceil, \rho)$, we have an
			$\rho$-valuation tree $w$ of $S t$ bounded by
			$u'$.  Using $w$, we can construct
			$\rho$-valuation tree $w_1$ of $Ss_1t$ and $w_2$
			of $s_0(St)$ bounded by $s_0 u'$.  By clause (1)
			of Lemma \ref{lem:v2}, values of both trees are
			equal.  Hence $T(s_0 u', \lceil Ss_1t = s_0(St)
			\rceil,\rho)$.  Since $s_0 u' \leq u' \oplus r$,
			by Lemma \ref{lem:T}, we have done.

	     \item[Defining axioms $|\ |$] 

			$$\rightarrow |0|=0$$

			Since $T(0, \lceil |0|=0 \rceil, \rho)$, by
			Lemma \ref{lem:T} we have done.

			$$ES|t| \rightarrow |s_0t| = Cond (t, 0,
			S|t|)$$

			By $T(u', \lceil ES|t| \rceil, \rho)$, we have
			an $\rho$-valuation tree $w$ of $t$ bounded by
			$u'$.  From $w$, we can construct
			$\rho$-valuation tree $w_1$ of $|s_0 t|$ and
			$w_2$ of $Cond(t, 0, S|t|)$ bounded by $s_0 u'$.
			By clause (1) of Lemma \ref{lem:v2}, the values of
			$w_1$ and $w_2$ are equal.  Hence,
			$T(s_0 u', \lceil |s_0t| = Cond (t, 0, S|t|)
			\rceil, \rho)$.  Since $s_0 u' \leq u \oplus r$,
			by Lemma \ref{lem:T} $T(u' \oplus r, \lceil
			|s_0t| = Cond (t, 0, S|t|) \rceil, \rho)$.

			$$ES|t| \rightarrow |s_1t| = S|t|$$

			Analogous to the proof above.

	     \item[Defining axioms $\lfloor \frac{}{2} \rfloor$] 
			$$\rightarrow \lfloor \frac{0}{2} \rfloor = 0$$

			Since $T(2, \lceil\lfloor \frac{0}{2} \rfloor =
			0 \rceil, \rho)$, we have done.

			$$Et \rightarrow \lfloor \frac{1}{2} s_0t
			  \rfloor = t$$

			Assume $T(u', \lceil Et \rceil, \rho)$.  Then,
			we have an $\rho$-valuation tree $w$ of $t$
			bounded by $u'$.  From $w$, we can construct
			$\rho$-valuation tree $w_1$ of $\lfloor \frac{1}{2}
			s_0t \rfloor$ bounded by $s_0 u'$.  By clause (1) of
			Lemma \ref{lem:v1}, $w$ is a $\rho$-valuation
			tree bounded by $s_0 u'$.  By clause (1) of
			Lemma \ref{lem:v2}, the values of $w_1$ and $w$
			are equal.  Hence $T(s_0 u', \lceil \lfloor
			\frac{1}{2} s_0t \rfloor = t \rceil, \rho)$.
			Since $s_0 u' \leq u \oplus r$, by Lemma
			\ref{lem:T} $T(u' \oplus r, \lceil \lfloor
			\frac{1}{2} s_0t \rfloor = t \rceil, \rho)$.

			$$Et \rightarrow \lfloor \frac{1}{2} s_1t
			\rfloor = t$$

			Analogous to the proof above.

	     \item[Defining axioms $\boxplus$] 

			$$Et \rightarrow t \boxplus 0 = t$$

			Assume $T(u', \lceil Et \rceil, \rho)$.  Then,
			we have an $\rho$-valuation tree $w$ of $t$
			bounded by $u'$.  Hence we have an
			$\rho$-valuation tree $w_1$ of $t \boxplus 0$
			bounded by $u'$.  By clause (1) of Lemma
			\ref{lem:v2}, both values of $w$ and $w_2$ are
			equal.  Hence $T(u', \lceil t \boxplus 0 = t
			\rceil, \rho)$.  By Lemma \ref{lem:T}, $T(u'
			\oplus r, \lceil t \boxplus 0 = t
			\rceil, \rho)$.

			$$Es_0(t_1 \boxplus t_2) \rightarrow t_1 \boxplus s_0
			t_2 = Cond (t_2, t_1, s_0 (t_1 \boxplus t_2))$$

			Assume $T(u', \lceil Es_0(t_1 \boxplus t_2)
			\rceil, \rho)$.  Then, we have an
			$\rho$-valuation tree $w$ of $Es_0(t_1 \boxplus
			t_2)$ bounded by $u'$.  By manipulating $w$, we
			can construct $\rho$-valuation tree $w_1$ of
			$t_1 \boxplus s_0 t_2$ and $w_2$ of $Cond (t_2,
			t_1, s_0 (t_1 \boxplus t_2))$ bounded by $u'$.
			By clause (1) of Lemma \ref{lem:v2}, the values
			of $w_1$ and $w_2$ are equal.  Hence $T(u',
			\lceil t_1 \boxplus s_0 t_2 = Cond (t_2, t_1,
			s_0 (t_1 \boxplus t_2)) \rceil, \rho)$.  By $u'
			\leq u' \oplus r$ and Lemma \ref{lem:T}, $T(u'
			\oplus r, \lceil t_1 \boxplus s_0 t_2 = Cond
			(t_2, t_1, s_0 (t_1 \boxplus t_2)) \rceil,
			\rho)$.

			$$Es_0(t_1 \boxplus t_2) \rightarrow t_1 \boxplus s_1
			t_2 = s_0 (t_1 \boxplus t_2)$$
			
			Analogous to the proof above.

	     \item[Defining axioms $\#$] 

			$$Et \rightarrow t \# 0 = 1$$

			Assume $T(u', \lceil Et \rceil, \rho)$.  Then,
			we have an $\rho$-valuation tree $w$ of $t$
			bounded by $u'$.  From $w$, we can construct
			$\rho$-valuation tree $w_1$ of $t \# 0$ bounded
			by $u'$.  By axioms (of $S^2_2$), the value of
			$w_1$ is $1$.  The valuation tree $w_2$ of $1$
			is bounded by $1$.  Hence $T(max\{u', 1\}, \lceil
			t \# 0 = 1 \rceil. \rho)$.  Since $max\{u', 1\}
			\leq u' \oplus \rho$, by Lemma \ref{lem:T} $T(u'
			\oplus r, \lceil t \# 0 = 1 \rceil. \rho)$.

			$$E (t_1 \# t_2) \boxplus t_1 \rightarrow t_1
			\# s_0 t_2 = Cond (t_2, 1, (t_1 \# t_2) \boxplus
			t_1)$$

			Assume $T(u', \lceil E (t_1 \# t_2) \boxplus t_1
			\rceil, \rho)$.  Then, we have an
			$\rho$-valuation tree $w$ of $(t_1 \# t_2)
			\boxplus t_1$ bounded by $u'$.  Manipulating
			$w$, we have $\rho$-valuation tree $w_1$ of $t_1
			\# s_0 t_2$ bounded by $max\{s_0 u', 1\}$ and a
			$\rho$-valuation tree $w_2$ of $Cond (t_2, 1,
			(t_1 \# t_2) \boxplus t_1)$ bounded by $max\{s_0
			u', 1\}$.  By clause (1) of Lemma \ref{lem:v2}
			and axioms of $S^2_2$, the values of $w_1$ and
			$w_2$ are equal.  Hence $T(max\{s_0 u', 1\},
			\lceil t_1 \# s_0 t_2 = Cond (t_2, 1, (t_1 \#
			t_2) \boxplus t_1) \rceil, \rho)$.  Since
			$max\{s_0 u', 1\} \leq u' \oplus r$, by Lemma
			\ref{lem:T} we have $T(u' \oplus r, \lceil t_1
			\# s_0 t_2 = Cond (t_2, 1, (t_1 \# t_2) \boxplus
			t_1) \rceil, \rho)$.

			$$E (t_1 \# t_2) \boxplus t_1 \rightarrow t_1 \#
			s_1t_2 = (t_1 \# t_2) \boxplus t_1 $$

			Analogous to the proof above.

	     \item[Defining axioms $parity$] 
			$$\rightarrow parity(0) = 0$$
	
			Since $T(0, \lceil parity(0) = 0 \rceil, \rho)$,
			we have done.

			$$Et \rightarrow parity(s_0 t) = 0$$.

			Assume $T(u', \lceil Et \rceil, \rho)$.  Then,
			we have an $\rho$-valuation tree $w$ of $t$
			bounded by $u'$.  From $w$, we can construct
			$\rho$-valuation tree $w_1$ of $parity(s_0 t)$
			bounded by $s_0 u'$.  By clause (1) of Lemma
			\ref{lem:v2} we can reason that the value of
			$w_1$ equals 0.  Hence we have $T(s_0 u', \lceil
			parity(s_0 t) = 0 \rceil, \rho)$.  Since $s_0 u'
			\leq u' \oplus r$ and by Lemma \ref{lem:T},
			$T(u' \oplus r, \lceil
			parity(s_0 t) = 0 \rceil, \rho)$.

			$$Et \rightarrow parity(s_1 t) = 1$$

			Assume $T(u', \lceil Et \rceil, \rho)$.  Then,
			we have an $\rho$-valuation tree $w$ of $t$
			bounded by $u'$.  From $w$, we can construct
			$\rho$-valuation tree $w_1$ of $parity(s_1 t)$
			bounded by $max\{s_1 u', 1\}$.  By clause (1) of
			Lemma \ref{lem:v2}, the value of $w_1$ equals 1.
			Hence we have $T(max\{s_1 u', 1\}, \lceil
			parity(s_0 t) = 0 \rceil, \rho)$.  Since
			$max\{s_1 u', 1\} \leq u' \oplus r$, $T(u'
			\oplus r, \lceil parity(s_1 t) = 1 \rceil,
			\rho)$ by Lemma \ref{lem:T}.

	     \item[Defining axioms $+$] 
			
			$$Et \rightarrow t + 0 = t$$

			Assume $T(u', \lceil Et \rceil, \rho)$.  Then,
			we have an $\rho$-valuation tree $w$ of $t$
			bounded by $u'$.  From $w$, we can construct
			$\rho$-valuation tree $w_1$ of $t + 0$ bounded
			by $u'$.  By axioms (of $S^2_2$), the value of
			$w_1$ equals to $w$. Hence $T(u', \lceil
			t + 0 = t \rceil. \rho)$.  Since $u'
			\leq u' \oplus \rho$, by Lemma \ref{lem:T} $T(u'
			\oplus r, \lceil t + 0 = t \rceil, \rho)$.
			
			$$E(\lfloor \frac{1}{2} t_1 \rfloor + t_2)
			  \rightarrow t_1 + s_0 t_2 = Cond(parity(t_1),
			  s_0(\lfloor \frac{1}{2} t_1 \rfloor + t_2) ,
			  s_1(\lfloor \frac{1}{2} t_1 \rfloor + t_2))$$

			Assume $T(u',\lceil E(\lfloor \frac{1}{2} t_1
			\rfloor + t_2) \rceil, \rho)$.  Then we have
			$\rho$-valuation tree $w$ of $\lfloor
			\frac{1}{2} t_1 \rfloor + t_2$ bounded by $u'$.
			Then, we can construct $\rho$-valuation trees
			$w_1$ of $t_1 + s_0 t_2$ and $w_2$ of
			$Cond(parity(t_1), s_0(\lfloor \frac{1}{2} t_1
			\rfloor + t_2) , s_1(\lfloor \frac{1}{2} t_1
			\rfloor + t_2))$ bounded by $s_1 u'$.  By clause
			(1) of Lemma \ref{lem:v2}, the values of $w_1$
			and $w_2$ are equal.  Hence $T(s_1 u',
			\lceil t_1 + s_0 t_2 = Cond(parity(t_1),
			  s_0(\lfloor \frac{1}{2} t_1 \rfloor + t_2) ,
			  s_1(\lfloor \frac{1}{2} t_1 \rfloor + t_2))
			\rceil, \rho)$.  Since $s_1 u' \leq u' \oplus
			r$, by Lemma \ref{lem:T}, $T(u' \oplus r,
			\lceil t_1 + s_0 t_2 = Cond(parity(t_1),
			  s_0(\lfloor \frac{1}{2} t_1 \rfloor + t_2) ,
			  s_1(\lfloor \frac{1}{2} t_1 \rfloor + t_2))
			\rceil, \rho)$.

			$$E(\lfloor \frac{1}{2} t_1 \rfloor + t_2)
			\rightarrow t_1 + s_1 t_2 = Cond(parity(t_1),
			s_1(\lfloor \frac{1}{2} t_1 \rfloor + t_2) ,
			s_0(S(\lfloor \frac{1}{2} t_1 \rfloor + t_2)))$$

			Assume $T(u',\lceil E(\lfloor \frac{1}{2} t_1
			\rfloor + t_2) \rceil, \rho)$.  Then we have
			$\rho$-valuation tree $w$ of $\lfloor
			\frac{1}{2} t_1 \rfloor + t_2$ bounded by $u'$.
			Then, we can construct $\rho$-valuation trees
			$w_1$ of $t_1 + s_1 t_2$ and $w_2$ of
			$Cond(parity(t_1), s_1(\lfloor \frac{1}{2} t_1
			\rfloor + t_2) , s_0(S(\lfloor \frac{1}{2} t_1
			\rfloor + t_2)))$ bounded by $s_0(S u')$.  Hence
			we have $T(s_0(S u'), \lceil t_1 + s_1 t_2 =
			Cond(parity(t_1), s_1(\lfloor \frac{1}{2} t_1
			\rfloor + t_2) , s_0(S(\lfloor \frac{1}{2} t_1
			\rfloor + t_2))) \rceil, \rho)$.  By $s_0(S u')
			\leq u' \oplus r$ and Lemma \ref{lem:T}, $T(u'
			\oplus r, \lceil t_1 + s_1 t_2 =
			Cond(parity(t_1), s_1(\lfloor \frac{1}{2} t_1
			\rfloor + t_2) , s_0(S(\lfloor \frac{1}{2} t_1
			\rfloor + t_2))) \rceil, \rho)$.

	     \item[Defining axioms $\cdot$] 
			$$Et \rightarrow t \cdot 0 = 0$$

			Assume $T(u', \lceil Et \rceil, \rho)$.  Then,
			we have an $\rho$-valuation tree $w$ of $t$
			bounded by $u'$.  From $w$, we can construct
			$\rho$-valuation tree $w_1$ of $t \cdot 0$ bounded
			by $u'$.  By axioms (of $S^2_2$), the value of
			$w_1$ equals to $0$. Hence $T(u', \lceil
			t \cdot 0 = 0 \rceil. \rho)$.  Since $u'
			\leq u' \oplus \rho$, by Lemma \ref{lem:T} $T(u'
			\oplus r, \lceil t \cdot 0 = t \rceil, \rho)$.
			
			$$E t_1 \cdot t_2 \rightarrow t_1 \cdot (s_0 t_2) =
			s_0(t_1 \cdot t_2)$$

			Assume $T(u', \lceil E t_1 \cdot t_2 \rceil,
			\rho)$.  Then, we have $\rho$-valuation tree $w$
			of $t_1 \cdot t_2$ bounded by $u'$.  Hence we
			have $\rho$-valuation trees $w_1$ of $t_1 \cdot
			(s_0 t_2)$ and $w_2$ of $s_0(t_1 \cdot t_2)$
			bounded by $s_0 u'$.  By clause (1) of Lemma
			\ref{lem:v2} and axioms of $S^2_2$, the values of
			$w_1$ and $w_2$ are equal.  Hence $T(s_0 u',
			\lceil t_1 \cdot (s_0 t_2) = s_0(t_1 \cdot t_2)
			\rceil, \rho)$.  Since $s_0 u' \leq u' \oplus
			r$, by Lemma \ref{lem:T} $T(u' \oplus r, \lceil
			t_1 \cdot (s_0 t_2) = s_0(t_1 \cdot t_2) \rceil,
			\rho)$.

			$$Es_0(t_1 \cdot t_2) + t_1 \rightarrow t_1 \cdot (s_1
			t_2) = s_0(t_1 \cdot t_2) + t_1$$

			Assume that $T(u', \lceil Es_0(t_1 \cdot t_2) +
			t_1 \rceil, \rho)$.  Then we have
			$\rho$-evaluation $w$ of term $s_0(t_1 \cdot
			t_2) + t_1$ bounded by $u'$.  By rearranging
			tree $w$ and using axioms of $S^2_2$, we have
			$\rho$-valuation trees $w'$ of term $t_1 \cdot
			(s_1 t_2)$ bounded by $u'$.  By axioms of
			$S^2_2$, values of $w$ and $w'$ are equal.
			Hence, $T(u', \lceil t_1 \cdot (s_1 t_2) =
			s_0(t_1 \cdot t_2) + t_1 \rceil, \rho)$.  By
			Lemma \ref{lem:T}, $T(u' \oplus r, \lceil t_1
			\cdot (s_1 t_2) = s_0(t_1 \cdot t_2) + t_1
			\rceil, \rho)$.
	    \end{description}
\item[Structural rules]     
	    \begin{description}
	     \item[Weakening]
			
			$$\infer{A, \Gamma \rightarrow
			\Delta}{\Gamma \rightarrow \Delta}$$

			By induction hypothesis, (\ref{eq:soundness})
			holds for the assumption.  Hence
			(\ref{eq:soundness}) trivially holds for the
			conclusion. 
	
			$$\infer{\Gamma \rightarrow
			 \Delta, A}{\Gamma \rightarrow \Delta}$$

			By induction hypothesis, (\ref{eq:soundness})
			holds for the assumption.  Hence
			(\ref{eq:soundness}) trivially holds for the
			conclusion. 

	     \item[Contraction] 
			$$\infer{A, \Gamma \rightarrow
			\Delta}{A, A, \Gamma \rightarrow \Delta}$$
			
			By induction hypothesis, (\ref{eq:soundness})
			holds for the assumption.  Hence
			(\ref{eq:soundness}) trivially holds for the
			conclusion.

			$$\infer{\Gamma \rightarrow
			\Delta, A}{\Gamma \rightarrow \Delta, A, A}$$

			By induction hypothesis, (\ref{eq:soundness})
			holds for the assumption.  Hence
			(\ref{eq:soundness}) trivially holds for the
			conclusion. 
			
	      \item[Exchange] 
			$$\infer{\Gamma, B, A, \Pi \rightarrow
			\Delta}{\Gamma, A, B, \Pi \rightarrow \Delta}$$

			By induction hypothesis, (\ref{eq:soundness})
			holds for the assumption.  Hence
			(\ref{eq:soundness}) trivially holds for the
			conclusion. 

			$$\infer{\Gamma \rightarrow \Delta, B, A,
			\Pi}{\Gamma \rightarrow \Delta, A, B, \Pi}$$

			By induction hypothesis, (\ref{eq:soundness})
			holds for the assumption.  Hence
			(\ref{eq:soundness}) trivially holds for the
			conclusion. 
	    \end{description}
  \item[Logical rules]
	     \begin{description}
	      \item[$\neg$-rules] $$\infer{\neg p(t_1, \dots, t_n),
			 \Gamma \rightarrow \Delta}{\infer*[r_1]{\Gamma
			 \rightarrow \Delta, p(t_1, \dots, t_n)}{}}$$ By
			 induction hypothesis, (\ref{eq:soundness})
			 holds for the assumption.  To prove
			 (\ref{eq:soundness}) for the conclusion, we
			 first assume that $\forall A \in \Gamma \ T(u',
			 \lceil A \rceil, \rho)$ and $T(u',\lceil \neg
			 p(t_1, \dots, t_n) \rceil,\rho)$.  By induction
			 hypothesis, Either $\exists B \in \Delta \ T(u'
			 \oplus r_1, \lceil B \rceil,\rho)$ or $T(u'
			 \oplus r_1, \lceil p(t_1, \dots, t_n)
			 \rceil,\rho)$.  But from hypothesis and Lemma
			 \ref{lem:T}, $T(u' \oplus r_1, \lceil p(t_1,
			 \dots, t_n) \rceil,\rho)$.  Hence, if we have
			 $T(u',\lceil \neg p(t_1, \dots, t_n)
			 \rceil,\rho)$ then contradiction.  Hence
			 $\exists B \in \Delta \ T(u' \oplus r_1, \lceil
			 B \rceil,\rho)$.  Since $u' \oplus r_1 \leq u'
			 \oplus r$, by Lemma \ref{lem:T}, we have done.

			 $$\infer{Et_1, \dots, Et_n, \Gamma \rightarrow
			 \Delta, \neg p(t_1, \dots,
			 t_n)}{\infer*[r_1]{p(t_1, \dots, t_n), \Gamma
			 \rightarrow \Delta}{}}$$

			 Assume $\forall A \in \Gamma \ T(u', \lceil A
			 \rceil, \rho)$ and $T(u', \lceil Et_i
			 \rceil,\rho)$ for all $i = 1, \dots, n$.  If
			 $\exists B \in \Delta T(u' \oplus r, \lceil A
			 \rceil, \rho)$, we have done.  So assume
			 otherwise.  Then $\forall B \in \Delta$, $T(u'
			 \oplus r_1, \lceil A \rceil, \rho)$ does not
			 hold.  Hence, by induction hypothesis, $T(u',
			 \lceil p(t_1, \dots, t_n) \rceil, \rho)$ does
			 not hold.  Therefore, by Lemma \ref{lem:EM} and
			 hypothesis, $T(u',
			 \lceil \neg p(t_1, \dots, t_n) \rceil, \rho)$
			 does holds.  Hence we have done.
			 
	      \item[$\wedge$-rules] $$\infer{A \wedge B. \Gamma
			 \rightarrow \Delta}{\infer*[r_1]{A, \Gamma
			 \rightarrow \Delta}{}}$$

			 By Lemma \ref{lem:T_0}, $T(u', \lceil A \wedge
			 B \rceil, \rho)$ implies $T(u', \lceil A
			 \rceil, \rho)$.  Hence, by induction
			 hypothesis, $\exists C \in \Delta$ such that
			 $T(u' \oplus r_1, \lceil C \rceil, \rho)$.
			 Since $u' \oplus r_1 \leq u' \oplus r$, by
			 Lemma \ref{lem:T}, we have done.

			 $$\infer{B \wedge A. \Gamma
			 \rightarrow \Delta}{A, \Gamma \rightarrow
			 \Delta}$$ 

			 This case is proved similarly as above.

			 $$\infer{\Gamma \rightarrow \Delta, A \wedge
			 B}{\infer*[r_1]{\Gamma \rightarrow \Delta, A
			 }{} \quad \infer*[r_2]{\Gamma \rightarrow \Delta, B}{}}$$

			 Assume $\forall C \in \Gamma \ T(u', \lceil C
			 \rceil, \rho)$. By induction hypothesis, either
			 $\exists D \in \Delta, T(u' \oplus r_1, \lceil
			 D \rceil, \rho)$ or $T(u' \oplus r_1, \lceil A
			 \rceil, \rho)$.  For the former case, since $u'
			 \oplus r_1 \leq u' \oplus r$, by Lemma
			 \ref{lem:T}, we have done.  Otherwise, $T(u'
			 \oplus r_1, \lceil A \rceil, \rho)$.  By
			 induction hypothesis, either $\exists D \in
			 \Delta, T(u' \oplus r_2, \lceil D \rceil,
			 \rho)$ or $T(u' \oplus r_2, \lceil B \rceil,
			 \rho)$.  For the former case, again since $u'
			 \oplus r_2 \leq u' \oplus r$, we have done.
			 Otherwise, $T(u' \oplus r_2, \lceil B \rceil,
			 \rho)$.  Since $u' \oplus r_1, u' \oplus r_2
			 \leq u' \oplus r$, $T(u' \oplus r, \lceil A
			 \rceil, \rho)$ and $T(u' \oplus r, \lceil B
			 \rceil, \rho)$.  Hence, by Lemma \ref{lem:T_0}
			 and the definition of $T$,  $T(u' \oplus r,
			 \lceil A \wedge B \rceil, \rho)$.  

	      \item[$\vee$-rules] $$\infer{A \vee B, \Gamma \rightarrow
			 \Delta}{\infer*[r_1]{A, \Gamma \rightarrow
			 \Delta}{} \quad B, \infer*[r_2]{\Gamma
			 \rightarrow \Delta}{}}$$

			 It suffices to show that if $T(u', \lceil A
			 \vee B \rceil, \rho)$ and $\forall C \in \Gamma,
			 T(u', \lceil C \rceil, \rho)$ then $\exists D
			 \in \Delta, T(u', \lceil D \rceil, \rho)$.
			 Assume that $T(u', \lceil A \vee B \rceil,
			 \rho)$ and $\forall C \in \Gamma, T(u', \lceil C
			 \rceil, \rho)$.  By definition of $T$ and Lemma
			 \ref{lem:T_0}, $T(u', \lceil A \vee B \rceil,
			 \rho)$ is equivalent to $T(u', \lceil A \rceil,
			 \rho)$ or $T(u', \lceil B \rceil, \rho)$.
			 Hence, by induction hypothesis, either $\exists
			 D \in \Delta T(u' \oplus r_1, \lceil D \rceil)$
			 or $\exists D \in \Delta T(u' \oplus r_2, \lceil
			 D \rceil)$.  Since $u' \oplus r_1, u' \oplus r_2
			 \leq u' \oplus r$, we have $\exists D \in \Delta
			 T(u' \oplus r, \lceil D \rceil)$
			 
			 $$\infer{\Gamma \rightarrow \Delta, A \vee
			 B}{\infer*[r_1]{\Gamma \rightarrow \Delta, A}{}}$$

			 Assume $\forall C \in \Gamma, T(u', \lceil C
			 \rceil, \rho)$.  By induction hypothesis,
			 either $\exists D \in \Delta, T(u' \oplus r_1,
			 \lceil D \rceil, \rho)$ or $T(u' \oplus r_1,
			 \lceil A \rceil, \rho)$.  If $\exists D \in
			 \Delta, T(u' \oplus r_1, \lceil D \rceil,
			 \rho)$, then since $u' \oplus r_1 \leq u' \oplus
			 r$, we have done.  Otherwise, $T(u' \oplus r_1,
			 \lceil A \rceil, \rho)$.  By definition of $T$
			 and Lemma \ref{lem:T_0}, we have $T(u' \oplus r_1,
			 \lceil A \vee B \rceil, \rho)$.  Since $u'
			 \oplus r_1 \leq u' \oplus r$, we have done.

			 $$\infer{\Gamma \rightarrow \Delta, B \vee
			 A}{\Gamma \rightarrow \Delta, A}$$

			 The proof is similar as above.

	      \item[$\forall$-rules] 

			 $$\infer{t \leq s, \forall x \leq s.  A(x),
			 \Gamma \rightarrow \Delta}{infer*[r_1]{A(t),
			 \Gamma \rightarrow \Delta}{}}$$

			 Assume that $C$ satisfies $T(u', \lceil C
			 \rceil, \rho)$ if $C$ is a formula in $t \leq
			 s, \forall x \leq s.  A(x), \Gamma$.  Then,
			 there are $c, d$ such that $v(\lceil t \rceil)
			 \downarrow_u' c$, $v(\lceil s \rceil)
			 \downarrow_u' d$ and $c \leq d$.  Since the
			 proof $w$ is a 1-normal proof, in $\forall x
			 \leq s.  A(x)$, $s$ has a form $|s'|$.  By
			 assumption, $T(u', \lceil \forall x \leq s.
			 A(x) \rceil, \rho)$.  By Definition
			 \ref{defn:T}, $\exists d' \leq u', v(u', \lceil
			 s' \rceil, \rho)$ and $\forall x \leq |d'|,
			 T_0(u', \lceil A(x) \rceil, \rho*x)$.  By Lemma
			 \ref{lem:v2}, $d = |d'|$.  Hence, $c \leq
			 |d'|$.  Therefore, $T_0(u', \lceil A(x) \rceil,
			 \rho*c)$.  By Lemma \ref{lem:T_0}, $T_0(u',
			 \lceil A(t) \rceil, \rho)$.  By Definition
			 \ref{defn:T}, $T(u', \lceil A(t) \rceil, \rho)$.
			 Combining the fact that $\forall C \in \Gamma,
			 T(u', \lceil C \rceil, \rho)$, by induction
			 hypothesis of $r_1$, $\exists D \in \Delta, T(u'
			 \oplus r_1, \lceil D \rceil, \rho)$.  Since $u'
			 \oplus r_1 \leq u' \oplus r$ and by Lemma
			 \ref{lem:T}, $T(u' \oplus r, \lceil D \rceil,
			 \rho)$

			 $$\infer{Nt, \Gamma \rightarrow \Delta, \forall
			 x \leq t. A(x)}{\infer*[r_1]{x \leq t, \Gamma
			 \rightarrow \Delta, A(x)}{}}$$ where $x$ does
			 not appear in $\Gamma, \Delta$ and $t$.

			 Assume $T(u', \lceil Nt \rceil, \rho)$ and
			 $\forall C \in \Gamma, T(u', \lceil C \rceil,
			 \rho)$.  Then, by the first assumption,
			 $\exists c \leq u'. v(u',\lceil t \rceil,
			 \rho)\downarrow_u' c$.  let $d$ be any natural
			 number satisfying $d \leq c$.  Then, $T(u',
			 \lceil x \leq t \rceil, \rho*d)$ and since $x$
			 does not occur in $\Gamma$, $\forall C \in
			 \Gamma, T(u', \lceil C \rceil, \rho*d)$.  By
			 induction hypothesis on $r_1$, either $\exists
			 D \in \Delta T(u' \oplus r_1, \lceil D \rceil,
			 \rho*d)$ or $T(u', \lceil A(x) \rceil, \rho*d)$.

	      \item[$\exists$-rules] $$\infer{\exists x \leq t. A(x),
			 \Gamma \rightarrow \Delta}{\infer*[r_1]{x \leq
			 t, A(x), \Gamma \rightarrow \Delta}{}}$$ where
			 $x$ does not appear in $\Gamma, \Delta$.

			 Assume $T(u', \lceil \exists x \leq t. A(x)
			 \rceil, \rho)$ and $\forall C \in \Gamma, T(u',
			 \lceil C \rceil, \rho)$.  By definition of $T$,
			 $\exists c \leq u'$ such that $v(\lceil t
			 \rceil, \rho) \downarrow_u' c$ and $\exists d
			 \leq c, T(u', \lceil A(x) \rceil , \rho*d)$.
			 Since $x$ does not appears in $\Gamma$,
			 $\forall C \in \Gamma, T(u', \lceil C \rceil,
			 \rho*d)$.  By induction hypothesis on $r_1$,
			 $\exists D \in \Delta, T(u' \oplus r_1, \lceil D
			 \rceil, \rho*d)$.  Since $D$ does not have $x$
			 as a free variable, $T(u' \oplus r_1, \lceil D
			 \rceil, \rho)$.  Since $u' \oplus r_1 \leq u'
			 \oplus r$ and by Lemma \ref{lem:T}, $T(u' \oplus
			 r, \lceil D \rceil, \rho)$.

			 $$\infer{t \leq s, \Gamma \rightarrow \Delta,
			 \exists x \leq s. A(x)}{\infer*[r_1]{\Gamma
			 \rightarrow \Delta, A(t)}{}}$$

			 Assume that $T(u', \lceil t \leq s \rceil,
			 \rho)$ and $\forall C \in \Gamma, T(u', \lceil C
			 \rceil, \rho)$.  By induction hypothesis on
			 $r_1$, either $\exists D \in \Delta, T(u' \oplus
			 r_1, \lceil D \rceil, \rho)$ or $T(u' \oplus
			 r_1, \lceil A(t) \rceil, \rho)$.  If $\exists D
			 \in \Delta, T(u' \oplus r_1, \lceil D \rceil,
			 \rho)$, we have done by Lemma \ref{lem:T_0}.
			 Hence, assume $T(u' \oplus r_1, \lceil A(t)
			 \rceil, \rho)$.  By Lemma \ref{lem:v2},
			 $\exists c. v(\lceil t \rceil,
			 \rho)\downarrow_{u' \oplus r_1} c$ and $T(u'
			 \oplus r_1, \lceil A(x) \rceil, \rho*c)$.
			 Since $T(u', \lceil t \leq s \rceil, \rho)$,
			 $\exists d. v(\lceil s \rceil,
			 \rho)\downarrow_u' d$ and $c \leq d$.
			 Therefore, by definition of $T$, $T(u' \oplus
			 r_1, \lceil \exists x \leq s. A(x) \rceil,
			 \rho)$.

	     \end{description}
	      \item[Cut-rule] $$\infer{\Gamma, \Pi \rightarrow \Delta,
			 \Lambda}{\infer*[r_1]{\Gamma \rightarrow
	    \Delta, A}{} \quad
	                          \infer*[r_2]{
			 A, \Pi \rightarrow \Lambda}{}}$$

	    Assume $\forall C \in \Gamma, \Pi, T(u', \lceil C \rceil,
	    \rho)$.  By induction hypothesis on $r_1$, either $\exists D
	    \in \Delta, T(u' \oplus r_1, \lceil C \rceil, \rho)$ or
	    $T(u' \oplus r_1, \lceil A \rceil, \rho)$.  If $\exists D
	    \in \Delta, T(u' \oplus r_1, \lceil C \rceil, \rho)$ then
	    $T(u' \oplus r, \lceil C \rceil, \rho)$ by Lemma
	    \ref{lem:T}, therefore we have done.  Hence, we assume that
	    $T(u' \oplus r_1, \lceil A \rceil, \rho)$.  By assumption
	    and Lemma \ref{lem:T}, $\forall C \in \Pi, T(u' \oplus r_1,
	    \lceil C \rceil, \rho)$.  Since $u' \oplus r_1 \leq u
	    \ominus r_2$, we can apply induction hypothesis to $u' \oplus
	    r_1$.  Hence, we have $\exists D \in \Lambda, T(u' \oplus
	    r_1 \oplus r_2, \lceil D \rceil, \rho)$.  Since $u' \oplus
	    r_1 \oplus r_2 \leq u' \oplus r$, $\exists D \in \Lambda,
	    T(u' \oplus r, \lceil D \rceil, \rho)$ by Lemma
	    \ref{lem:T}.  Hence we have done.
\end{description}
\end{proof}

\begin{theorem}
 $S^2_2 \vdash \forall w \neg 1-Prf(w, \lceil \rightarrow \rceil)$
\end{theorem}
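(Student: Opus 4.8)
The plan is to obtain the theorem as an immediate consequence of the soundness Proposition \ref{prop:soundness}, specialized to the root of a purported proof of the empty sequent. I argue inside $S^2_2$. Assume $1\text{-}sPrf(w, \lceil \rightarrow \rceil)$ and let $r$ be the root node of $w$. Its associated sequent is the empty sequent, so $\Gamma_r = \Delta_r = \emptyset$; since the empty sequent contains no free variables, the number of parameter variables is $k_r = 0$.

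First I would instantiate (\ref{eq:soundness}) at this root node. Taking $u = 0$, the bound $u \# 2^{k_r} = 0 \# 2^0$ still bounds the (fixed, small) code of the empty valuation $\rho$, which is the unique sequence with $seq(\rho)$ and $Len(\rho) = 0$; the conjunct $\forall i < k_r (\beta(i+1,\rho) \leq u)$ is vacuous. Hence $\rho$ satisfies the hypothesis governing the outer bounded quantifier of (\ref{eq:soundness}). Choosing $u' = 0 \leq u \circleddash r$, the inner statement reads
\[
\bigl(\forall A \in \emptyset \; T(0, \lceil A \rceil, \rho)\bigr) \supset \bigl(\exists B \in \emptyset \; T(0 \oplus r, \lceil B \rceil, \rho)\bigr).
\]

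The key observation is that both cedents are empty: the antecedent $\forall A \in \emptyset \,(\cdots)$ is vacuously true, whereas the succedent $\exists B \in \emptyset \,(\cdots)$ is vacuously false. Thus the displayed implication has the form $\top \supset \bot$ and is therefore false, contradicting Proposition \ref{prop:soundness}, which asserts exactly this instance. Consequently the assumption $1\text{-}sPrf(w, \lceil \rightarrow \rceil)$ is untenable, and since $w$ was arbitrary we conclude $\forall w \; \neg 1\text{-}sPrf(w, \lceil \rightarrow \rceil)$.

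Finally, I would confirm that the whole argument is available in $S^2_2$. Proposition \ref{prop:soundness} is itself derivable there, and the reduction above uses only the instantiation of its bounded quantifiers at the explicit witnesses $\rho$ and $u' = 0$, together with the purely propositional collapse of the two empty cedents; no further induction is needed, so nothing escapes $S^2_2$. I expect the only delicate points — and hence the main, though minor, obstacle — to be bookkeeping: checking that the code of the empty valuation really lies below $u \# 2^{k_r}$ for the chosen $u$, and that $u \circleddash r$ genuinely admits $u' = 0$, so that the witnesses used to refute (\ref{eq:soundness}) lie within the stated bounds.
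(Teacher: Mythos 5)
Your proposal is correct and matches the paper's intent exactly: the paper's proof is simply ``Immediate from Proposition \ref{prop:soundness},'' and your argument --- instantiating (\ref{eq:soundness}) at the root of a purported proof of the empty sequent, where the vacuously true antecedent over $\Gamma_r = \emptyset$ and vacuously false succedent over $\Delta_r = \emptyset$ yield a contradiction --- is precisely the reasoning that justification leaves implicit. The bookkeeping caveats you note (choice of $u$, the bound on the empty valuation's code) are harmless since the proposition holds for any $u$.
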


\begin{proof}
 Immediate from Proposition \ref{prop:soundness}.
\end{proof}

\section{Conjectures}\label{sec:conjectures}

In this section, we discuss several conjectures concerning $S^0_2 E$.

The most interesting problem concerning $S^0_2 E$ is whether $S^2_1$
 proves $\forall w \neg 1-Prf(w, \lceil \rightarrow \rceil)$ or not.  If
 the answer is negative, we have $S^2_1 \not= S^2_2$, hence the
 fundamental problem of bounded arithmetic is solved.

It would be easier to prove 

 $$S^2_1 \not\vdash \forall w \neg Prf(w, \lceil \rightarrow \rceil)$$

allowing any formula in the proof, since Solovay's cut shortening
technique would work.  To use Solovay's cut shortening technique, we
need to convert $S^2_1$ proof to $S^0_2 E$ proof with $\Sigma^b_1$-PIND.
This is achieved another conjecture.  Let $\phi(\vec{x})$ be a
$\Sigma^b_1$-formula with free variables $\vec{x} = x_1, x_2, \dots,
x_n$.  Assume $S^2_1 \vdash \phi(\vec{x})$.  Then, we conjecture $S^2_0
E + \Sigma^b_1-PIND \vdash N\vec{x} \rightarrow \phi(\vec{x})$, where
$N\vec{x}$ stands for the sequent $Nx_1, Nx_2, \dots, Nx_n$.  This is
plausible because $S^2_0E$ contains all inductive definition necessary
to prove totality and uniqueness of functions and predicates.

\bibliography{../my.bib,../yoriyuki-bounded-arithmetic.bib}

\end{document}